\documentclass{amsart}
\usepackage{amsmath,amsthm,amssymb,IMjournal,bm}

\begin{document}
\newtheorem{The}{Theorem}[section]
\newtheorem{lem}[The]{Lemma}
\newtheorem{defi}[The]{Definition}
\newtheorem{ex}[The]{Example}
\newtheorem{remark}[The]{Remark}
\newtheorem{cor}[The]{Corollary}

\numberwithin{equation}{section}

\title{Guaranteed two-sided bounds on all eigenvalues of preconditioned
diffusion and elasticity problems solved by the finite element method}

\author{\|Martin |Ladeck\' y|, Prague,
        \|Ivana |Pultarov\' a|, Prague, 
        \|Jan |Zeman|, Prague}

\rec {August, 2019}


\abstract 
A method of estimating all eigenvalues of a preconditioned discretized scalar diffusion operator with Dirichlet boundary conditions has been recently introduced in T.~Gergelits, K.-A.~Mardal, B.~F.~Nielsen, 
Z.~Strako\v{s}: Laplacian preconditioning of elliptic PDEs: Localization of the eigenvalues of the discretized operator, SIAM Journal on Numerical Analysis 57(3) (2019), 1369--1394. Motivated by this paper, we offer a slightly different approach that extends the previous results in some directions. Namely, we provide bounds on all increasingly ordered eigenvalues of a general diffusion or elasticity operator with tensor data, discretized with the conforming finite element method, preconditioned by the inverse of a matrix of the same operator with different data. Our results hold for mixed Dirichlet and Robin or periodic boundary conditions applied to the original and preconditioning problems. The bounds are two-sided, guaranteed, easily accessible, and depend solely on the material data.
\endabstract

\keywords
   bounds on eigenvalues, preconditioning, elliptic
differential equations
\endkeywords

\subjclass
65F08, 65N30
\endsubjclass

\thanks
All authors acknowledge the financial support received from the Czech Science Foundation (project No. 17-04150J). This research has been performed in the Center of Advanced Applied Sciences (CAAS), financially supported by the European Regional Development Fund (project No.~CZ.02.1.01/0.0/0.0/16\_019/0000778).
 Ivana Pultarov\' a and Martin Ladeck\' y were supported also by the Grant Agency of the Czech Technical University in Prague, grant No.~SGS19/002/OHK1/1T/11.
\endthanks

\section{Introduction}
In 2009, Nielsen, Tveito, and Hackbusch studied 
in~\cite{NielsenTH2009} spectra of elliptic differential operators
of the type $\nabla\cdot k\nabla$ defined in 
infinite-dimensional spaces 
which are preconditioned using the inverse of the Laplacian.
They proved that the range of the scalar coefficient
$k$ is contained in the 
spectrum of the preconditioned operator, provided that $k$ is continuous. 
Ten years later, Gergelits, Mardal, Nielsen, and Strako\v s
showed in~\cite{G} without any assumption
about the continuity of the scalar function
$k$ that there exists a one-to-one pairing 
between the eigenvalues of the  
discretized operator of the type $\nabla\cdot k\nabla$ preconditioned by the inverse of the discretized Laplacian 
and the intervals determined by the images under $k$ 
of the supports of the conforming finite element (FE)
nodal basis functions used for the discretization.

The present paper contributes to the results of~\cite{G} and generalizes some of them.
While in~\cite{G}, a one-to-one pairing between the eigenvalues and
images of the scalar data $k$ defined on supports of the
FE basis function is proved, we introduce guaranteed 
two-sided bounds on all individual eigenvalues. 
Our approach is based on the Courant--Fischer
min-max principle. Similarly as in~\cite{G},
the bounds can be obtained solely from the data of
the original and preconditioning problems defined on supports of the
FE basis functions.
While in~\cite{NielsenTH2009} and~\cite{G} only 
the diffusion operator with scalar data is considered and
the Laplacian operator is used for preconditioning, we 
treat also the diffusion operator with 
tensor data and with Dirichlet or Robin boundary conditions
for both the original and preconditioning operators.
Our theory also applies to operators with non-zero null spaces
and to operators with vector valued unknown functions;
as an example we study the elasticity operator with
general tensor data.
Any kind of conforming FE basis functions can be employed
for discretization; the sets of the FE basis functions must be 
the same for the original and preconditioning operators.
For the sake of brevity, the name preconditioning matrix (operator)
will be used for the matrix $\widetilde{\mathsf{M}}$ (or operator)
which is (spectrally) 
close to the original matrix $\mathsf{M}$ (or operator, respectively) rather than for the inverse of $\widetilde{\mathsf{M}}$.
In contrast, in literature, including~\cite{G}, 
$\widetilde{\mathsf{M}}^{-1}$ is often called the preconditioning matrix.


For numerical solution of sparse discretized elliptic partial differential 
equations, the conjugate gradient method (or Krylov subspace methods, in general) is a method of choice; see, e.g., \cite{LiesenS,vanderVorst,SaadBook}. 
It is well known, that its convergence depends on
distribution (clustering) of eigenvalues of the related matrices
and on magnitudes of the associated eigenvectors within the initial residual. 
For example, large outlying eigenvalues or well-separated clusters of large eigenvalues can lead to acceleration of convergence,
see, e.g., \cite{LiesenS,Sluis} or the example in~\cite[Section~2]{G}.
Using finite precision arithmetic, however, can slow down the 
convergence rate; see, e.g.~\cite{MeurantStrakos,Strakos91}
and the recent comprehensive paper~\cite{GerStr}.
In any case, controlling or estimating not only condition numbers but also distributions of all eigenvalues of preconditioned matrices can yield faster convergence.
Our approach can also provide guaranteed easily accessible 
lower bounds on the 
smallest eigenvalue of the preconditioned problem, which 
is demanded, for example, for accurate algebraic error 
estimates; see, e.g., \cite{MeurantT}.

The structure of the paper is as follows. 
In the subsequent section, we introduce the diffusion and linear elasticity equations as examples of
scalar and vector valued elliptic differential equations
which our approach can be applied to.
In the third section, the discretization and the preconditioning 
setting are described.
In the fourth section, the main part of the paper, we 
suggest a method of estimating the eigenvalues 
of the preconditioned matrices.
The theoretical developments are accompanied with 
illustrative examples.
Finally, we compare our method with the recent results from~\cite{G}.
A short conclusion summarizes the paper.

\section{Diffusion and elasticity problems}\label{sec2}

Our theory of estimating the eigenvalues will be applied to two
frequent types of scalar and vector valued
elliptic partial differential equations: the  
diffusion and linear elasticity equations, respectively.
To this end, let us briefly introduce the associated 
definitions and notation; see, e.g., \cite{Blaheta1994,CiarletEla,ErnG,NecasH}
for further details. We assume general mixed
boundary conditions for the diffusion equation,
and for simplicity of exposition, homogeneous Dirichlet boundary 
conditions for the elasticity equation.

Let $\Omega\subset \mathbb{R}^d$ be a polygonal bounded domain,
where $d=2$ or $3$.
We consider the {\it diffusion equation} with Dirichlet and Robin
boundary conditions
\begin{equation*}
\nabla\cdot \bm{A}\nabla u=f\;\text{in}\;\Omega,\quad
u=g_1\; \text{on}\;\partial\Omega_1,\quad
\bm{n}\cdot\bm{A}\nabla u=g_2-g_3u\; \text{on}\;\partial\Omega_2,\quad
\end{equation*}
where $\partial\Omega_1$ and $\partial\Omega_2$ are two disjoint 
parts of the boundary $\partial\Omega$, 
${\partial\Omega}={\partial\Omega_1}\cup{\partial\Omega_2}$, and $\bm{n}$ denotes the 
outer normal to $\partial\Omega_2$.
After lifting the solution
$u$ by a~differentiable function $u_0$ that fulfills
the non-homogeneous Dirichlet boundary condition and substituting
$u:=u+u_0$, the weak 
form of the new problem reads: find 
$u\in V=\{v\in H^1(\Omega);\; v=0\;\text{on}\; \partial{\Omega}_1   \}$ such that
\begin{equation}\label{p1}
(u,v)_A=l_{A,f}(v),\quad v\in V,
\end{equation}
where
\begin{eqnarray}
(u,v)_A&=&
\int_{\Omega}\nabla v\cdot \bm{A}\nabla {u}\,{\rm d}\bm{x}
+\int_{\partial\Omega_2}g_3uv\, {\rm d}S,\nonumber\\
l_{A,f}(v)&=&
\int_{\Omega}fv\,{\rm d}\bm{x}-
\int_\Omega\nabla v\cdot \bm{A}\nabla u_0\,{\rm d}\bm{x}
+\int_{\partial\Omega_2}g_2v
\; {\rm d}S
+\int_{\partial\Omega_2}\bm{n}\cdot \bm{A}\nabla u_0 v\,{\rm d}S,
\nonumber
\end{eqnarray}
for $u,v\in V$; see, e.g., \cite{ErnG} for details. We assume $f\in L^2(\Omega)$, $g_2\in L^2(\partial\Omega_2)$, and $g_3\in L^\infty(\partial\Omega_2)$, $g_3(\bm{x})\ge 0$ on $\partial\Omega_2$.
The material data $\bm{A}:\Omega\to\mathbb{R}^{d\times d}$ 
are assumed to be essentially bounded, i.e.~$\bm{A}\in L^\infty(\Omega;\, \mathbb{R}^{d\times d})$, symmetric, and uniformly elliptic 
(positive definite) in~$\Omega$. Thus there exist constants $0<c_A\le C_A<\infty$ such that
\begin{align}\label{cACA}
c_A\Vert \bm{v} \Vert^2_{\mathbb{R}^d}
\le
(\bm{A}(\bm{x})\bm{v},\bm{v})_{\mathbb{R}^d}
\le
C_A\Vert\bm{v}\Vert^2_{\mathbb{R}^d},
\quad \bm{x} \in \Omega ,\;\bm{v}\in \mathbb{R}^d.
\end{align}

The weak form of the {\it linear elasticity problem} with 
homogeneous boundary conditions reads: find $\bm{u}\in V^d_0$,
$V_0= \{v\in H^1(\Omega);\; v=0\;\text{on}\; \partial{\Omega}\}$, such that
\begin{equation}\label{p2}
(\bm{u},\bm{v})_C
=
l_{C,F}( \bm{v}),\quad \bm{v} \in V^d_0,
\end{equation}
where
\begin{eqnarray}
(\bm{u},\bm{v})_C&=&\int_{\Omega}\sum_{i,j,k,l=1}^dc_{ijkl}\frac{\partial u_k}
{\partial x_l}\frac{\partial v_i}{\partial x_j}\,{\rm d}{\bm x},\nonumber\\ 
l_{C,F}(\bm{v})&=&-\int_{\Omega}\sum_{i=1}^d F_iv_i\,{\rm d}\bm{x},\nonumber
\end{eqnarray}
for $\bm{u}, \bm{v}\in V_0^d$, 
where $\bm{F}\in (L^2(\Omega))^d$ are body forces.
Due to the homogeneous Dirichlet boundary conditions on 
$\partial\Omega_1=\partial\Omega$,
we use the special notation $V_0$ of the solution space. Let
\begin{equation}\label{Cauchystress}
\tau_{ij}=\sum_{k,l=1}^dc_{ijkl}\, e_{kl}(\bm{u}),\quad
i,j=1,\dots,d,
\end{equation}
be the components of the Cauchy stress tensor $\bm \tau$ with
the strain components $e_{ij}$ obtained from the displacement vector $\bm{u}$ as 
\begin{equation*}
e_{kl}(\bm{u})=\frac{1}{2}\left(\frac{\partial u_k}{\partial x_l}+
\frac{\partial u_l}{\partial x_k}\right),\quad
k,l=1,\dots,d.
\end{equation*}
Assuming $d=3$ and denoting $e_i=e_{ii}$, $i=1,\dots,d$, we can write
\begin{equation}
\bm{e}=\left(
\begin{array}{c}
e_1\\
e_2\\
e_3\\
2e_{12}\\
2e_{23}\\
2e_{31}\end{array}\right)=
\left(\begin{array}{ccc}
\frac{\partial}{\partial x_1}&0&0\\
0&\frac{\partial}{\partial x_2}&0\\
0&0&\frac{\partial}{\partial x_3}\\
\frac{\partial}{\partial x_2}&\frac{\partial}{\partial x_1}&0\\
0&\frac{\partial}{\partial x_3}&\frac{\partial}{\partial x_2}\\
\frac{\partial}{\partial x_3}&0&\frac{\partial}{\partial x_1}\\
\end{array}
\right) \left(
\begin{array}{c}
u_1\\
u_2\\
u_3\end{array}\right)=\bm{\partial u}.\nonumber
\end{equation}
We assume that the coefficients $c_{ijkl}$ of the tensor $\bm{c}$ in~(\ref{Cauchystress}) 
are bounded measurable functions defined in $\Omega$,
$c_{ijkl}\in L^\infty(\Omega)$, fulfilling the symmetry conditions
\begin{equation}\label{sym_c}
c_{ijkl}=c_{jikl}=c_{klij},\quad i,j,k,l=1,\dots,d.
\end{equation}
Further, we assume there exists a constant $\mu>0$ such that
\begin{equation*}
\mu\sum_{i,j=1}^d\xi_{ij}^2\le \sum_{i,j,k,l=1}^dc_{ijkl}(\bm{x})\xi_{ij}
\xi_{kl}\quad \text{for all symmetric tensors}\; \bm{\xi} \in 
{\mathbb R}^{d\times d},\, \bm{x}\in \Omega.
\end{equation*}
Assuming $d=3$, due to the symmetries~\eqref{sym_c} of $\bm c$, 
there exist coefficients $c_{ij}\in L^\infty(\Omega)$, 
$i,j=1,\dots,6$, such that
the stress vector $\bm\tau$ can be obtained from the strain vector as 
\begin{equation}
\bm{\tau}=\left(
\begin{array}{c}
\tau_1\\
\tau_2\\
\tau_3\\
\tau_{12}\\
\tau_{23}\\
\tau_{31}\end{array}\right)=
\left(\begin{array}{cccccc}
c_{11} & c_{12} & c_{13} & c_{14} & c_{15} & c_{16} \\
c_{12} & c_{22} & c_{23} & c_{24} & c_{25} & c_{26} \\
c_{13} & c_{23} & c_{33} & c_{34} & c_{35} & c_{36} \\
c_{14} & c_{24} & c_{34} & c_{44} & c_{45} & c_{46} \\
c_{15} & c_{25} & c_{35} & c_{45} & c_{55} & c_{56} \\
c_{16} & c_{26} & c_{36} & c_{46} & c_{56} & c_{66}  
\end{array}
\right) \left(
\begin{array}{c}
e_1\\
e_2\\
e_3\\
2e_{12}\\
2e_{23}\\
2e_{31}\end{array}\right)=\bm{Ce}.\nonumber
\end{equation}
Starting from this place, we will use only the new set of material 
coefficients $c_{ij}$, $i,j=1,\dots,6$, 
(instead of $c_{ijkl}$, $i,j,k,l=1,\dots,d$)
and call the associated matrix $\bm C$.
Some special material qualities imply certain
structures of $\bm C$.
For example, homogeneous cubic 3D material corresponds to
$c_{11}=c_{22}=c_{33}$, $c_{44}=c_{55}=c_{66}$,
$c_{12}=c_{13}=c_{23}$, and annihilates the other components,
where $c_{11}>c_{12}$, $c_{11}+2c_{12}>0$ and $c_{44}>0$.
Especially, for isotropic material, we have
\begin{equation*}
c_{11}=\frac{E(1-\nu)}{(1+\nu)(1-2\nu)},\quad
c_{12}=\frac{E\nu}{(1+\nu)(1-2\nu)},\quad
c_{44}=\frac{E}{2(1+\nu)},
\end{equation*}
where $E>0$ is the Young's modulus and $\nu\in (-1,\frac{1}{2})$ is the Poisson ratio~\cite{NecasH}. 

The vector $\bm F$ of external forces fulfills  
\begin{equation*}
-\bm{\partial}^T\bm{\tau}=
-
\left(\begin{array}{cccccc}
\frac{\partial}{\partial x_1}&0&0&\frac{\partial}{\partial x_2}&0&\frac{\partial}{\partial x_3}\\
0&\frac{\partial}{\partial x_2}&0&\frac{\partial}{\partial x_1}&\frac{\partial}{\partial x_3}&0\\
0&0&\frac{\partial}{\partial x_3}&0&\frac{\partial}{\partial x_2}&\frac{\partial}{\partial x_1}
\end{array}
\right)\left(
\begin{array}{c}
\tau_1\\
\tau_2\\
\tau_3\\
\tau_{12}\\
\tau_{23}\\
\tau_{31}\end{array}\right)=\left(
\begin{array}{c}
F_1\\
F_2\\
F_3\end{array}\right)=\bm{F}
\end{equation*}
yielding 
\begin{equation*}
-\bm{\partial}^T\bm{C\partial u}=\bm{F}.
\end{equation*}
Thus $(\bm{u},\bm{v})_C$ and $l_{C,F}(\bm{v})$
can be equivalently written as
\begin{eqnarray}
(\bm{u},\bm{v})_C&=&\int_{\Omega}(\bm{\partial v})^T\bm{C\partial u}\,{\rm d}\bm{x}\nonumber\\
l_{C,F}(\bm{v})&=&\int_{\Omega}\bm{ v}^T\bm{F}\,{\rm d}\bm{x}.\nonumber
\end{eqnarray}
If $d=2$, the dimensions of the arrays naturally reduce. For example,
for cubic material we get
\begin{equation}\nonumber
\bm{u}=\left(
\begin{array}{c}
u_1\\
u_2\end{array}\right),\quad
\bm{\tau}=\left(
\begin{array}{c}
\tau_1\\
\tau_2\\
\tau_{12}\end{array}\right),\quad 
\bm{\partial}=
\left(\begin{array}{cc}
\frac{\partial}{\partial x_1}&0\\
0&\frac{\partial}{\partial x_2}\\
\frac{\partial}{\partial x_2}&\frac{\partial}{\partial x_1}
\end{array}
\right),\quad
\bm{C}=\left(\begin{array}{ccc}
c_{11} & c_{12} & 0 \\
c_{12} & c_{11} & 0 \\
0 & 0 & c_{44} 
\end{array}
\right).
\end{equation}

\section{Discretization and preconditioning}\label{Sec_Discret}

We assume that a conforming FE method is employed
to discretization of the diffusion and elasticity problems
defined by~\eqref{p1} and~\eqref{p2}, respectively. 
The domain $\Omega$ is thus decomposed into a finite number of elements 
$\mathcal{E}_j$, $j=1,\dots,N_e$. Some 
continuous FE basis functions (with compact supports) denoted by
 $\varphi_k$, $k=1,\dots,N$, are used
 as approximation and test functions.
 By $\mathcal{P}_k$ we denote
the smallest patch of elements covering the support of $\varphi_k$.  
 Correspondingly to section~\ref{sec2}, 
 we denote the material data by $\bm{A}$ and $\bm{C}$ 
of the diffusion and elasticity operators, respectively,
and the data of the associated preconditioning operators 
by $\widetilde{\bm{A}}$ and $\widetilde{\bm{C}}$, respectively.
The function $g_3$ entering the Robin boundary conditions is 
allowed to be 
different in the original and preconditioning operators;
therefore, it is denoted by $\widetilde{g}_3$ in the latter. 

The stiffness matrices $\mathsf{A}$ and $\mathsf{C}$
of the systems of linear equations of the discretized
problems~\eqref{p1} and~\eqref{p2}, respectively,
have elements
\begin{equation*}
\mathsf{A}_{kl}=\int_{\Omega}\nabla \varphi_{l}(\bm{x})\cdot\bm{A}(\bm{x})\nabla \varphi_{k}({\bm{x}})\,{\text d}\bm{x}+
\int_{\partial\Omega_2}g_3(\bm{x})\varphi_{l}(\bm{x})\varphi_{k}({\bm{x}})\,{\text d}S
\end{equation*}
and
\begin{equation}\label{precC}
\mathsf{C}_{\bm{kl}}=\int_{\Omega}
(\bm{\partial} (\varphi_{l_1}(\bm{x}),\dots,\varphi_{l_d}(\bm{x}))^T)^T
\bm{C}(\bm{x})\bm{\partial} (\varphi_{k_1}(\bm{x}),\dots,\varphi_{k_d}(\bm{x}))^T\,{\text d}\bm{x},
\end{equation}
respectively, where $k,l=1,\dots,N$, and $\bm{k},\bm{l}\in \{1,\dots,N\}^d$.
The preconditioning matrices
$\widetilde{\mathsf{A}}$ and $\widetilde{\mathsf{C}}$
obtained for the material data $\widetilde{\bm{A}}$ and
$\widetilde{\bm{C}}$, respectively, have elements
\begin{equation*}
\widetilde{\mathsf{A}}_{kl}=\int_{\Omega}\nabla \varphi_{l}(\bm{x})\cdot\widetilde{\bm{A}}(\bm{x})\nabla \varphi_{k}({\bm{x}})\,{\text d}\bm{x}+\int_{\partial\Omega_2}{\widetilde{g}_3}(\bm{x})\varphi_{l}(\bm{x})\varphi_{k}({\bm{x}})\,{\text d}S
\end{equation*}
and
\begin{equation*}
\widetilde{\mathsf{C}}_{\bm{kl}}=\int_{\Omega}
(\bm{\partial} (\varphi_{l_1}(\bm{x}),\dots,\varphi_{l_d}(\bm{x}))^T)^T
\widetilde{\bm{C}}(\bm{x})\bm{\partial} (\varphi_{k_1}(\bm{x}),\dots,\varphi_{k_d}(\bm{x}))^T\,{\text d}\bm{x},
\end{equation*}
respectively. 
All integrals are supposed to be carried out exactly.

The idea of preconditioning, see, e.g.~\cite[Section~10.3]{Golub}
or~\cite[Chapters~9 and~10]{SaadBook}, is based on 
assumptions that 
a system of linear equations with a matrix $\widetilde{\mathsf{M}}$ is relatively easily solvable and that the spectrum of $\widetilde{\mathsf{M}}^{-1}\mathsf{M}$ is more favorable than
that of $\mathsf{M}$ regarding some iterative solution method,
which does not necessarily mean a smaller condition number~\cite{G}.
 Substituting the equation
$\mathsf{Mu}=\mathsf{B}$ with 
\begin{equation*}
\widetilde{\mathsf{M}}^{-1}\mathsf{Mu}=\widetilde{\mathsf{M}}^{-1}\mathsf{B}\quad \text{or}\quad 
\widetilde{\mathsf{M}}^{-1/2}\mathsf{M}\widetilde{\mathsf{M}}^{-1/2}\mathsf{v}=\widetilde{\mathsf{M}}^{-1/2}\mathsf{B},\;
\mathsf{u}=\widetilde{\mathsf{M}}^{-1/2}\mathsf{v},
\end{equation*}
thus leads to equivalent problems that can be solved more
efficiently than the original one.

\section{Bounds on eigenvalues of preconditioned problems}

The main results of the paper are introduced in this section.
Instead of presenting our results for a general
elliptic second order partial differential equation with 
tensor data and a vector valued unknown function $\bm{u}$, 
we first present our theory for the
(scalar) diffusion equation with tensor data in full detail.
Then we apply the same approach to the elasticity equation.
The section is concluded by some general remarks mainly on relationship
between our results and the recent results from~\cite{G}.

\subsection{Diffusion equation}\label{sec_diff}

The lower and upper bounds on the eigenvalues 
$0\le\lambda_1\le\dots\le\lambda_N$ of 
$\widetilde{\mathsf{A}}^{-1}\mathsf{A}$
for any uniformly positive definite measurable
data $\bm{A},\widetilde{\bm{A}}:\Omega\to \mathbb{R}^{d\times d}$
are introduced in this part.
The boundary conditions of the original and preconditioning problems
may differ at most in the function $g_3$, i.e.~instead of $g_3$, the function 
$\widetilde{g}_3$ can be used in Robin boundary condition of
the preconditioning problem. We assume, however, that there exist
constants $0<c_g\le C_g<\infty$ such that  
\begin{equation*}
0\le c_g\,\widetilde{g}_3(\bm{x})\le {g}_3(\bm{x})\le C_g\,\widetilde{g}_3(\bm{x}),\quad \bm{x}\in \partial\Omega_2.
\end{equation*}
Since $N$ is the number of the
FE basis functions then $\mathsf{A},\widetilde{\mathsf{A}}\in\mathbb{R}^{N\times N}$.
We now build two sequences of positive real numbers
$\lambda^{\rm L}_k$  and $\lambda^{\rm U}_k$, 
$k=1,\dots,N$. Let us first set
\begin{eqnarray}
\alpha^{\rm min}_j&=&{\rm ess\, inf}_{\bm{x}\in \mathcal{E}_j}\;\lambda_{\rm min}
\left(\widetilde{\bm{A}}^{-1}(\bm{x}){\bm{A}}(\bm{x})\right),\nonumber\\
\alpha^{\rm max}_j&=&{\rm ess\, sup}_{\bm{x}\in \mathcal{E}_j}\;\lambda_{\rm max}
\left(\widetilde{\bm{A}}^{-1}(\bm{x}){\bm{A}}(\bm{x})\right),
\nonumber
\end{eqnarray}
if no edge of $\mathcal{E}_j$ lies in $\partial\Omega_2$, and
\begin{eqnarray}
&&\alpha^{\rm min}_j\nonumber\\
&&=\min\left\{
{\rm ess\, inf}_{\bm{x}\in \partial\Omega_2\cap\overline{\mathcal{E}}_j,\, g_3(\bm{x})\ne 0}\;\widetilde{g}_3^{-1}(\bm{x})g_3(\bm{x}),\;
{\rm ess\, inf}_{\bm{x}\in \mathcal{E}_j}\;\lambda_{\rm min}
\left(\widetilde{\bm{A}}^{-1}(\bm{x}){\bm{A}}(\bm{x})\right)\right\},\nonumber\\
&&\alpha^{\rm max}_j\nonumber\\
&&=\max\left\{{\rm ess\, sup}_{\bm{x}\in \partial\Omega_2\cap\overline{\mathcal{E}}_j,\, g_3(\bm{x})\ne 0}\;\widetilde{g}_3^{-1}(\bm{x})g_3(\bm{x}),\;{\rm ess\, sup}_{\bm{x}\in \mathcal{E}_j}\;\lambda_{\rm max}
\left(\widetilde{\bm{A}}^{-1}(\bm{x}){\bm{A}}(\bm{x})\right)\right\}
\nonumber
\end{eqnarray}
if at least one edge of $\mathcal{E}_j$ lies in $\partial\Omega_2$, $j=1,\dots,N_e$.
If $\bm{A}(\bm{x})$ and $\widetilde{\bm{A}}(\bm{x})$
are element-wise constant and if $g_3$
and $\widetilde{g}_3$ are constant on every edge (of any element) 
lying in 
$\partial\Omega_2$, the computation of $\alpha^{\min}_j$
and $\alpha^{\max}_j$
reduces to calculating the extreme eigenvalues of $d\times d$
matrices on all individual elements $\mathcal{E}_j$, $j=1,\dots,N_e$,
and eventual comparing them with $\widetilde{g}_3^{-1}(\bm{x})g_3(\bm{x})$
on some of the attached edges.
For every function $\varphi_k$, supported on the patch 
$\mathcal{P}_k$, let us set 
\begin{equation}\label{lam12}
\lambda^{\rm L}_k=\min_{\mathcal{E}_j\subset \mathcal{P}_k}
\alpha^{\rm min}_j,\quad
\lambda^{\rm U}_k=\max_{\mathcal{E}_j\subset \mathcal{P}_k}
\alpha^{\rm max}_j, \quad j=1,\dots,N.
\end{equation}
Thus $\lambda^{\rm L}_k$ and $\lambda^{\rm U}_k$ are in the above sense 
the smallest and the largest, respectively, eigenvalues of  
$\widetilde{\bm{A}}^{-1}(\bm{x}){\bm{A}}(\bm{x})$ on the patch $\mathcal{P}_k$, or the extremes of $\widetilde{g}_3^{-1}g_3$
along the parts of the boundary of $\mathcal{P}_k$ lying in 
$\partial\Omega_2$.
After inspecting all patches, we sort the two series in~\eqref{lam12}
non-decreasingly. Thus we obtain two bijections 
\begin{equation*}r,s:\{1,\dots,N\}\to \{1,\dots,N\}
\end{equation*} such that
\begin{equation}\label{LUbounds}
\lambda_{r(1)}^{\rm L}\le \lambda_{r(2)}^{\rm L}\le \dots\le 
\lambda_{r(N)}^{\rm L},\qquad
\lambda_{s(1)}^{\rm U}\le \lambda_{s(2)}^{\rm U}\le \dots\le 
\lambda_{s(N)}^{\rm U}.
\end{equation}
Note that we could define and compute
$\lambda_k^{\rm L}$ and $\lambda_k^{\rm U}$
directly without defining 
$\alpha^{\rm min}_j$ and  $\alpha^{\rm max}_j$. 
However, dealing with the constants 
$\alpha^{\rm min}_j$ and  $\alpha^{\rm max}_j$ is more 
algorithmically acceptable, because it allows to avoid multiple evaluation of eigenvalues of $\widetilde{\bm{A}}^{-1}\bm{A}$ 
on every element.

Next we prove an auxiliary lemma.
Let $\sigma(\mathsf{M})$ denote the spectrum of the matrix $\mathsf{M}$.
\begin{lem}\label{lem1}
Let $\bm{A}(\bm{x}),\widetilde{\bm{A}}(\bm{x})\in \mathbb{R}^{d\times d}$ be symmetric and positive definite for all $\bm{x}\in \mathcal{D}\subset\Omega$. 
Let there exist constants $0<c_1\le c_2<\infty$ and $0<c_3\le c_4<\infty$
such that
\begin{equation}\label{lem1c}
\sigma(\widetilde{\bm{A}}^{-1}(\bm{x})\bm{A}(\bm{x}))\subset [ c_1,c_2],
\quad \bm{x}\in \mathcal{D},
\end{equation}
and 
\begin{equation*}0\le c_3\,\widetilde{g}_3(\bm{x})\le g_3(\bm{x})\le 
c_4\,\widetilde{g}_3(\bm{x}), \quad \bm{x}\in \partial\Omega_2
\cap\overline{\mathcal{D}}.
\end{equation*}
Then for $u\in H_0^1(\Omega)$ we get
\begin{equation}\label{lem1a}
c_1\int_\mathcal{D} \nabla u\cdot\widetilde{\bm{A}}\nabla u\,{\rm d}\bm{x}\le
\int_\mathcal{D} \nabla u\cdot\bm{A}\nabla u\,{\rm d}\bm{x}\le c_2
\int_\mathcal{D} \nabla u\cdot\widetilde{\bm{A}}\nabla u\,{\rm d}\bm{x}
\end{equation}
and
\begin{eqnarray}
&&\min\{c_1,c_3\}\left(\int_\mathcal{D} \nabla u\cdot\widetilde{\bm{A}}\nabla u\,{\rm d}\bm{x}+
\int_{\partial\Omega_2\cap\overline{\mathcal{D}}}\widetilde{g}_3u^2\,{\rm d}S
\right)\nonumber\\
&&\le\int_\mathcal{D} \nabla u\cdot\bm{A}\nabla u\,{\rm d}\bm{x}+
\int_{\partial\Omega_2\cap\overline{\mathcal{D}}}g_3u^2\,{\rm d}S\label{lem1b}\\
&&\le 
\max\{c_2,c_4\}
\left(\int_\mathcal{D} \nabla u\cdot\widetilde{\bm{A}}\nabla u\,{\rm d}\bm{x}+
\int_{\partial\Omega_2\cap\overline{\mathcal{D}}}\widetilde{g}_3u^2\,{\rm d}S
\right).\nonumber
\end{eqnarray}
\end{lem}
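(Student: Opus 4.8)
The plan is to reduce both displayed inequalities to one pointwise fact, namely that the spectral hypothesis~\eqref{lem1c} is equivalent to the quadratic-form comparison
\begin{equation*}
c_1\,(\widetilde{\bm{A}}(\bm{x})\bm{v},\bm{v})_{\mathbb{R}^d}\le(\bm{A}(\bm{x})\bm{v},\bm{v})_{\mathbb{R}^d}\le c_2\,(\widetilde{\bm{A}}(\bm{x})\bm{v},\bm{v})_{\mathbb{R}^d},\qquad\bm{v}\in\mathbb{R}^d,
\end{equation*}
holding for every $\bm{x}\in\mathcal{D}$, after which everything follows by integrating nonnegative integrands. To obtain this, I would fix $\bm{x}\in\mathcal{D}$ and note that, although $\widetilde{\bm{A}}^{-1}(\bm{x})\bm{A}(\bm{x})$ need not be symmetric, it is similar to the symmetric matrix $\mathsf{B}(\bm{x}):=\widetilde{\bm{A}}^{-1/2}(\bm{x})\bm{A}(\bm{x})\widetilde{\bm{A}}^{-1/2}(\bm{x})$ built from the symmetric positive definite square root of $\widetilde{\bm{A}}(\bm{x})$; hence $\sigma(\mathsf{B}(\bm{x}))=\sigma(\widetilde{\bm{A}}^{-1}(\bm{x})\bm{A}(\bm{x}))\subset[c_1,c_2]$, and symmetry of $\mathsf{B}(\bm{x})$ gives $c_1\Vert\bm{w}\Vert^2_{\mathbb{R}^d}\le(\mathsf{B}(\bm{x})\bm{w},\bm{w})_{\mathbb{R}^d}\le c_2\Vert\bm{w}\Vert^2_{\mathbb{R}^d}$ for all $\bm{w}\in\mathbb{R}^d$. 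Substituting $\bm{w}=\widetilde{\bm{A}}^{1/2}(\bm{x})\bm{v}$ and using the symmetry of $\widetilde{\bm{A}}^{\pm1/2}(\bm{x})$ to rewrite $(\mathsf{B}(\bm{x})\bm{w},\bm{w})_{\mathbb{R}^d}=(\bm{A}(\bm{x})\bm{v},\bm{v})_{\mathbb{R}^d}$ and $\Vert\bm{w}\Vert^2_{\mathbb{R}^d}=(\widetilde{\bm{A}}(\bm{x})\bm{v},\bm{v})_{\mathbb{R}^d}$ then delivers the displayed comparison.

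Next I would insert $\bm{v}=\nabla u(\bm{x})$. For $u\in H_0^1(\Omega)$ one has $\nabla u\in L^2(\Omega;\mathbb{R}^d)$, and since $\bm{A},\widetilde{\bm{A}}\in L^\infty(\Omega;\mathbb{R}^{d\times d})$ the functions $\bm{x}\mapsto\nabla u\cdot\bm{A}\nabla u$ and $\bm{x}\mapsto\nabla u\cdot\widetilde{\bm{A}}\nabla u$ are integrable on $\mathcal{D}$ and nonnegative a.e.; integrating the pointwise comparison over $\mathcal{D}$ yields~\eqref{lem1a} directly.

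For~\eqref{lem1b} I would add in the boundary contribution. Multiplying the hypothesis $c_3\widetilde{g}_3(\bm{x})\le g_3(\bm{x})\le c_4\widetilde{g}_3(\bm{x})$ by the nonnegative value $u^2(\bm{x})$ of the squared trace of $u$ and integrating over $\partial\Omega_2\cap\overline{\mathcal{D}}$ gives $c_3\int_{\partial\Omega_2\cap\overline{\mathcal{D}}}\widetilde{g}_3u^2\,{\rm d}S\le\int_{\partial\Omega_2\cap\overline{\mathcal{D}}}g_3u^2\,{\rm d}S\le c_4\int_{\partial\Omega_2\cap\overline{\mathcal{D}}}\widetilde{g}_3u^2\,{\rm d}S$. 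Since both $\int_{\mathcal{D}}\nabla u\cdot\widetilde{\bm{A}}\nabla u\,{\rm d}\bm{x}$ and $\int_{\partial\Omega_2\cap\overline{\mathcal{D}}}\widetilde{g}_3u^2\,{\rm d}S$ are nonnegative, I can weaken the two lower constants $c_1$ and $c_3$ to their common minimum $\min\{c_1,c_3\}$ (resp.\ raise $c_2$ and $c_4$ to $\max\{c_2,c_4\}$) and add the bulk estimate~\eqref{lem1a} to the boundary estimate term by term, which produces the two-sided bound~\eqref{lem1b}.

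I do not expect a genuine obstacle here. The only step needing a moment's care is the first one: the spectral assumption~\eqref{lem1c} is stated for the generally non-symmetric product $\widetilde{\bm{A}}^{-1}\bm{A}$, so one must pass through the symmetrized matrix $\widetilde{\bm{A}}^{-1/2}\bm{A}\widetilde{\bm{A}}^{-1/2}$ (whose spectrum coincides with that of $\widetilde{\bm{A}}^{-1}\bm{A}$ by similarity) before invoking the characterization of the spectral interval by the quadratic form. Everything afterwards is elementary measure theory using only positive definiteness of $\widetilde{\bm{A}}$ and nonnegativity of $\widetilde{g}_3$ and $u^2$.
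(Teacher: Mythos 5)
Your proposal is correct and follows essentially the same route as the paper: derive the pointwise quadratic-form comparison $c_1\,\bm{v}^T\widetilde{\bm{A}}\bm{v}\le\bm{v}^T\bm{A}\bm{v}\le c_2\,\bm{v}^T\widetilde{\bm{A}}\bm{v}$ from~\eqref{lem1c}, set $\bm{v}=\nabla u$ and integrate, then handle the boundary term via nonnegativity of $\widetilde{g}_3 u^2$ and the $\min/\max$ of the constants. The only difference is that you make explicit the symmetrization $\widetilde{\bm{A}}^{-1/2}\bm{A}\widetilde{\bm{A}}^{-1/2}$ justifying the first step, which the paper leaves implicit.
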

\begin{proof}
Since for all $\bm{v}\in \mathbb{R}^d$ and $\bm{x}\in \mathcal{D}$
it follows from~\eqref{lem1c} that
\begin{equation*}
c_1\,\bm{v}^T\widetilde{\bm{A}}(\bm{x})\bm{v}\le \bm{v}^T\bm{A}(\bm{x})
\bm{v}\le c_2\,\bm{v}^T\widetilde{\bm{A}}(\bm{x})\bm{v},
\end{equation*}
we get~\eqref{lem1a} by setting $\bm{v}=\nabla u$ and 
integrating all three terms over $\mathcal D$. Inequalities~\eqref{lem1b} follow obviously using $g_3\ge 0$.
\end{proof}

Now we introduce the first part of the main results of this paper.

\begin{The}\label{thm1}
Let us assume that the $(d-1)$-dimensional measure of $\partial \Omega_1$ is positive.
The lower and upper bounds on the  
eigenvalues $0<\lambda_1\le \lambda_2\le \dots\le \lambda_N$
of $\widetilde{\mathsf{A}}^{-1}\mathsf{A}$ 
are given by~\eqref{LUbounds}, i.e.,
\begin{equation}\label{thm1a}
\lambda^{\rm L}_{r(k)}\le \lambda_k\le \lambda^{\rm U}_{s(k)},\qquad
k=1,\dots,N.
\end{equation}
\end{The}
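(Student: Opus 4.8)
The plan is to reduce the matrix eigenvalue estimates to pointwise bounds on the quadratic forms via the Courant--Fischer min-max principle, using the fact that a conforming FE function restricted to a patch $\mathcal{P}_k$ only ``sees'' the data on that patch. Write $\lambda_k$ as the $k$-th eigenvalue of the generalized eigenproblem $\mathsf{A}\bm{u}=\lambda\widetilde{\mathsf{A}}\bm{u}$, so that by Courant--Fischer
\begin{equation*}
\lambda_k=\min_{\substack{S\subset\mathbb{R}^N\\ \dim S=k}}\ \max_{\bm{0}\ne\bm{u}\in S}\ \frac{\bm{u}^T\mathsf{A}\bm{u}}{\bm{u}^T\widetilde{\mathsf{A}}\bm{u}}
=\max_{\substack{S\subset\mathbb{R}^N\\ \dim S=N-k+1}}\ \min_{\bm{0}\ne\bm{u}\in S}\ \frac{\bm{u}^T\mathsf{A}\bm{u}}{\bm{u}^T\widetilde{\mathsf{A}}\bm{u}}.
\end{equation*}
Identifying a coefficient vector $\bm{u}\in\mathbb{R}^N$ with the FE function $u=\sum_l u_l\varphi_l$, the Rayleigh quotient becomes
\begin{equation*}
\frac{\bm{u}^T\mathsf{A}\bm{u}}{\bm{u}^T\widetilde{\mathsf{A}}\bm{u}}
=\frac{\displaystyle\int_\Omega\nabla u\cdot\bm{A}\nabla u\,{\rm d}\bm{x}+\int_{\partial\Omega_2}g_3u^2\,{\rm d}S}{\displaystyle\int_\Omega\nabla u\cdot\widetilde{\bm{A}}\nabla u\,{\rm d}\bm{x}+\int_{\partial\Omega_2}\widetilde{g}_3u^2\,{\rm d}S}.
\end{equation*}
The positivity of $\lambda_1$ (hence of all $\lambda_k$) follows from the coercivity guaranteed by the positive $(d-1)$-measure of $\partial\Omega_1$: the denominator is a norm on $V$, so it vanishes only for $\bm u=\bm 0$, and $\mathsf A$ is positive definite.

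For the \textbf{upper bound}, I would exhibit a good $k$-dimensional subspace for the outer $\min$. Order the patches so that $\lambda^{\rm U}_{s(1)}\le\dots\le\lambda^{\rm U}_{s(N)}$ and let $S_k=\operatorname{span}\{\varphi_{s(1)},\dots,\varphi_{s(k)}\}$, a $k$-dimensional space (the $\varphi$'s being linearly independent). For any $u=\sum_{i=1}^k u_{s(i)}\varphi_{s(i)}\in S_k$, the support of $u$ is contained in $\mathcal{P}_{s(1)}\cup\dots\cup\mathcal{P}_{s(k)}$, and on every element $\mathcal{E}_j$ contained in this union we have $\sigma(\widetilde{\bm A}^{-1}\bm A)\subset[\,\cdot\,,\alpha^{\max}_j]\subset[\,\cdot\,,\lambda^{\rm U}_{s(k)}]$ and, on boundary edges, $\widetilde g_3^{-1}g_3\le\lambda^{\rm U}_{s(k)}$, by the definitions \eqref{lam12} and the monotone ordering. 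Applying Lemma~\ref{lem1} with $\mathcal D$ the interior of that union of elements (and $c_2=c_4=\lambda^{\rm U}_{s(k)}$) gives, integrand by integrand,
\begin{equation*}
\int_\Omega\nabla u\cdot\bm A\nabla u\,{\rm d}\bm x+\int_{\partial\Omega_2}g_3u^2\,{\rm d}S\le\lambda^{\rm U}_{s(k)}\Bigl(\int_\Omega\nabla u\cdot\widetilde{\bm A}\nabla u\,{\rm d}\bm x+\int_{\partial\Omega_2}\widetilde g_3u^2\,{\rm d}S\Bigr),
\end{equation*}
i.e.\ the Rayleigh quotient is $\le\lambda^{\rm U}_{s(k)}$ on all of $S_k$, hence $\lambda_k\le\lambda^{\rm U}_{s(k)}$. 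The \textbf{lower bound} is the mirror image using the max-characterization: take $T_k=\operatorname{span}\{\varphi_{r(k)},\dots,\varphi_{r(N)}\}$ of dimension $N-k+1$; every $u\in T_k$ is supported on $\mathcal P_{r(k)}\cup\dots\cup\mathcal P_{r(N)}$, and on that set $\lambda_{\min}(\widetilde{\bm A}^{-1}\bm A)\ge\alpha^{\min}_j\ge\lambda^{\rm L}_{r(k)}$ and $\widetilde g_3^{-1}g_3\ge\lambda^{\rm L}_{r(k)}$, so Lemma~\ref{lem1} (with $c_1=c_3=\lambda^{\rm L}_{r(k)}$) gives Rayleigh quotient $\ge\lambda^{\rm L}_{r(k)}$ on $T_k$, whence $\lambda_k\ge\lambda^{\rm L}_{r(k)}$.

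The one subtle point, and the step I expect to need the most care, is the bookkeeping that translates ``$u\in S_k$ is a combination of the first $k$ basis functions in the ordered list'' into ``$u$ is supported on a union of patches on which the relevant extreme eigenvalue is $\le\lambda^{\rm U}_{s(k)}$,'' and in particular making sure the maximum over elements in $\mathcal P_{s(i)}$ defining $\lambda^{\rm U}_{s(i)}$ together with the monotone ordering really does dominate $\widetilde{\bm A}^{-1}\bm A$ at every point of the combined support — including the Robin-edge contributions, where one must check that an element $\mathcal E_j$ touching $\partial\Omega_2$ contributes $\alpha^{\max}_j$ that already absorbs $\widetilde g_3^{-1}g_3$. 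Once this containment-of-supports argument is set up cleanly, the rest is a direct application of Lemma~\ref{lem1} and Courant--Fischer, with no further estimates needed; linear independence of the $\varphi_k$ guarantees the trial spaces have the stated dimensions, so the min-max characterization applies verbatim.
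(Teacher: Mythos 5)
Your proof is correct and takes essentially the same route as the paper: the Courant--Fischer characterization of the generalized eigenproblem combined with Lemma~\ref{lem1}, applied to trial subspaces spanned by the FE basis functions ordered according to $r$ and $s$ so that the support of any trial function lies in a union of patches on which the local extremes are controlled by $\lambda^{\rm L}_{r(k)}$ and $\lambda^{\rm U}_{s(k)}$. The only (cosmetic) difference is that the paper writes the lower-bound subspaces as coordinate constraints $\mathsf{v}_{r(1)}=\dots=\mathsf{v}_{r(k-1)}=0$ and proceeds iteratively, leaving the upper bounds as ``analogous,'' whereas you state both bounds explicitly for general $k$.
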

\begin{proof}
Due to the positive measure of $\partial\Omega_1$, the matrices
$\widetilde{\mathsf{A}}$ and $\mathsf{A}$ are positive definite.
We only prove the lower bounds of~\eqref{thm1a}; 
the upper bounds can be
proved analogously.
Due to the Courant--Fischer min-max theorem, e.g.~\cite[Theorem~8.1.2]{Golub}, 
\begin{equation*}
\lambda_k=\max_{S,\, {\rm dim}S=N-k+1}\, \min_{\mathsf{v}\in S,\, \mathsf{v}
\ne \mathsf{0} }
\frac{\mathsf{v}^T\mathsf{Av}}{\mathsf{v}^T\widetilde{\mathsf{A}}\mathsf{v}},
\end{equation*}
where $S$ denotes a subspace of $\mathbb{R}^N$.
Then we have
\begin{eqnarray}
\lambda_1&=& \max_{S,\, {\rm dim}S=N}\, \min_{\mathsf{v}\in S,\, \mathsf{v}\ne \mathsf{0} }
\frac{\mathsf{v}^T\mathsf{Av}}{\mathsf{v}^T\widetilde{\mathsf{A}}\mathsf{v}}=
 \min_{\mathsf{v}\in \mathbb{R}^N,\, \mathsf{v}\ne \mathsf{0} }
\frac{\mathsf{v}^T\mathsf{Av}}{\mathsf{v}^T\widetilde{\mathsf{A}}\mathsf{v}}
\ge \lambda^{\rm L}_{r(1)}, \nonumber
\end{eqnarray}
where the inequality follows from Lemma~\ref{lem1}.
Indeed, using ${u}=\sum_{i=1}^N\mathsf{v}_i\varphi_i$,
definitions~\eqref{lam12} and Lemma~\ref{lem1} with
$\mathcal{D}=\Omega$, we get
\begin{equation*}
\frac{\mathsf{v}^T\mathsf{Av}}{\mathsf{v}^T\widetilde{\mathsf{A}}\mathsf{v}}=
\frac{\int_\Omega \nabla u\cdot{\bm{A}}\nabla u\,{\rm d}\bm{x}
+
\int_{\partial\Omega_2}{g_3}u^2\,{\text d}S
}{
\int_\Omega \nabla u\cdot\widetilde{\bm{A}}\nabla u\,{\rm d}\bm{x}+\int_{\partial\Omega_2}{\widetilde{g}_3}u^2\,{\text d}S}
\ge \min_{\mathcal{E}_j\subset\Omega}\alpha^{\rm min}_j
=\min_{\mathcal{P}_k\subset\Omega}\lambda^{\rm L}_k
=\lambda^{\rm L}_{r(1)}.
\end{equation*}
Then we proceed to
\begin{eqnarray}
\lambda_2&=& \max_{S,\, {\rm dim}S=N-1}\, \min_{\mathsf{v}\in S,\, \mathsf{v}\ne \mathsf{0} }
\frac{\mathsf{v}^T\mathsf{Av}}{\mathsf{v}^T\widetilde{\mathsf{A}}\mathsf{v}}\ge
 \min_{\mathsf{v}\in \mathbb{R}^N,\, \mathsf{v}\ne \mathsf{0},\, \mathsf{v}_{r(1)} =0}
\frac{\mathsf{v}^T\mathsf{Av}}{\mathsf{v}^T\widetilde{\mathsf{A}}\mathsf{v}}\ge \lambda^{\rm L}_{r(2)}, \nonumber
\end{eqnarray}
where the last inequality follows from Lemma~\ref{lem1} where 
(due to  $\mathsf{v}_{r(1)}=0$)
$\mathcal{D}$ contains only the patches associated to the FE
basis functions $\varphi_j$, $j\ne r(1)$,
\begin{equation*}
\mathcal{D}=\cup_{j\in\{1,\dots,N\}\setminus\{ r(1)\}}\mathcal{P}_j,
\end{equation*}
and from
\begin{eqnarray}
\min_{\mathsf{v}\in \mathbb{R}^N,\, \mathsf{v}\ne \mathsf{0},\, \mathsf{v}_{r(1)} =0}
\frac{\mathsf{v}^T\mathsf{Av}}{\mathsf{v}^T\widetilde{\mathsf{A}}\mathsf{v}}&&=
\min_{u=\sum_{i=1}^N\mathsf{v}_i\varphi_i,\, \mathsf{v}_{r(1)}=0}
\frac{\int_\mathcal{D} \nabla u\cdot{\bm{A}}\nabla u\,{\rm d}\bm{x}
+
\int_{\partial\Omega_2\cap\overline{\mathcal{D}}}{g_3}u^2\,{\text d}S
}{
\int_\mathcal{D} \nabla u\cdot\widetilde{\bm{A}}\nabla u\,{\rm d}\bm{x}+\int_{\partial\Omega_2\cap\overline{\mathcal{D}}}{\widetilde{g}_3}u^2\,{\text d}S}
\nonumber\\
&&\ge\min_{\mathcal{E}_j\subset\mathcal{D}}\alpha^{\rm min}_j
=\min_{\mathcal{P}_k\subset\mathcal{D}}\lambda^{\rm L}_k
=\lambda^{\rm L}_{r(2)}.\nonumber
\end{eqnarray}
We can proceed further in the same manner to get all inequalities
$\lambda^{\rm L}_{r(k)}\le \lambda_k$ of~\eqref{thm1a}. 
\end{proof}

In Theorem~\ref{thm1}, we consider positive definite
problems with homogeneous Dirichlet and/or general
Robin boundary conditions (with $g_3\ge 0$). 
Neumann boundary condition is a special
type of Robin boundary condition with $g_3=0$.
In practical implementation of nonhomogeneous Dirichlet boundary 
conditions, the lifting function $u_0$ does not necessarily
have to be employed.
If the same non-homogeneous Dirichlet boundary conditions
are considered for the original and preconditioning problems,
the method of getting 
the lower and upper bounds~\eqref{LUbounds} can be used unchanged.
Our theory, however, does not cover the settings where the original and 
preconditioning problems are considered under 
different non-homogeneous Dirichlet
boundary conditions or different functions $g_2$ in Robin boundary 
conditions, or if $\partial \Omega_1$ in the preconditioning problem does not
coincide with $\partial \Omega_1$ used for the original problem.

If periodic or Neumann boundary conditions are applied 
along $\partial \Omega$ and if they are the same for 
the original and preconditioning problems,
then $\mathsf A$ and $\widetilde{\mathsf{A}}$ are singular;
they share the smallest eigenvalue $\lambda_1=0$ 
and the associated eigenvector.
Then we can use the same method again to get the bounds on all of the eigenvalues of the preconditioned matrix;
however, we must omit the null space of $\mathsf{A}$ 
(which is the same as the null space of $\widetilde{\mathsf{A}}$) from the respective formulas. 
To justify the method, we can proceed analogously
as in the proof of Theorem~\ref{thm1}, where the vectors $\mathsf v$ are
now additionally considered
fulfilling $\widetilde{\mathsf{A}}\mathsf{v}\ne 0$. Then 
\begin{equation*}
\lambda_2\ge \min_{\mathsf{v}\in \mathbb{R}^N,\, \widetilde{\mathsf{A}}\mathsf{v}
\ne \mathsf{0} }
\frac{\mathsf{v}^T\mathsf{Av}}{\mathsf{v}^T\widetilde{\mathsf{A}}\mathsf{v}}\ge \lambda^{\rm L}_{r(1)}.
\end{equation*}
We can proceed further, analogously to the proof of Theorem~\ref{thm1},
\begin{equation*}
\lambda_3\ge
 \min_{\mathsf{v}\in \mathbb{R}^N,\,  \widetilde{\mathsf{A}}\mathsf{v}\ne \mathsf{0},\, \mathsf{v}_{r(1)} =0}
\frac{\mathsf{v}^T\mathsf{Av}}{\mathsf{v}^T\widetilde{\mathsf{A}}\mathsf{v}}
\ge \lambda^{\rm L}_{r(2)}. \nonumber
\end{equation*}
In this way we get $N-1$ lower bounding numbers on the 
non-zero eigenvalues of $\widetilde{\mathsf{A}}^{-1}\mathsf{A}$,
where both $\mathsf{A}$ and $\widetilde{\mathsf{A}}$ are now
considered restricted to the subspace of $\mathbb{R}^N$ 
that is orthogonal to the null space of $\mathsf{A}$.
Analogously, we get the upper bounds; thus finally,
\begin{equation*}
\lambda^{\rm L}_{r(k-1)}\le \lambda_k\le \lambda^{\rm U}_{s(k)},\quad k=2,\dots,N.
\end{equation*}

Let us now apply our method to some examples.
\begin{ex} \label{ex111}
Assume $d=2$, $\Omega=(-\pi,\pi)^2$,
$\partial\Omega_2=\{\bm{x};\, x_1=\pi\}$, 
$$\bm{A}(\bm{x})=\left(\begin{array}{cc}
1+0.3\,{\rm sign}(\sin(x_2))   &  0.3+0.1\cos(x_1)\\
0.3+0.1\cos(x_1)   &     1+0.3\,{\rm sign}(\sin(x_2))
\end{array}\right),$$
and a simple and a more sophisticated preconditioning operators with
$$\widetilde{\bm{A}}_1(\bm{x})=\left(\begin{array}{cc}
1&0\\
0& 1\end{array}\right),\quad\text{and}\quad 
\widetilde{\bm{A}}_2(\bm{x})=\left(\begin{array}{cc}
1&0.3\\
0.3& 1\end{array}\right),
$$
respectively. Let us consider one of the following settings:\\
(a) uniform grid with piece-wise bilinear FE functions, $N=10^2$ or
$30^2$, $g_3=0$; see Figure~\ref{fff1}; \\
(b) uniform grid with piece-wise bilinear FE functions, 
periodic boundary conditions, $N=21^2$; see Figure~\ref{fff2}; \\
(c) nonuniform grid and triangular elements with piece-wise linear
FE functions, $g_3=\widetilde{g}_3=1+x_2^2$, $N=400$; see Figure~\ref{fff3}. \\
The numerical experiments illustrate that the bounds on the 
eigenvalues are guaranteed for different types of boundary conditions. We can also notice that since ${\bm{A}}$
is point-wise closer to $\widetilde{\bm{A}}_2$ than to $\widetilde{\bm{A}}_1$,
the spectrum of the second preconditioned problem 
(together with its bounds) is closer to unity than the spectrum of the problem preconditioned by using $\widetilde{\bm{A}}_1$.
Note also that refining the mesh does not lead to more accurate 
bounds, in general. This is caused by the difference between the extreme eigenvalues
of $\widetilde{\bm{A}}_i^{-1}\bm{A}$, $i=1,2$, on individual elements;
see also section~\ref{Sec33}. 

The numbers of the CG steps needed to reduce the energy norm of the 
errors by the factor $10^{-9}$ (starting with zero
initial vectors) for setting (a) with $f=1$ in $\Omega$ 
are 17 and  13 for $\widetilde{\bm{A}}_1$ and $\widetilde{\bm{A}}_2$,
respectively, for $N=10^2$, and   20 and  15 for $\widetilde{\bm{A}}_1$ and $\widetilde{\bm{A}}_2$,
respectively, for $N=30^2$.

\begin{figure}[ht]
\vskip2pt plus 2pt \vbox to140pt{\vfil\hbox{\includegraphics{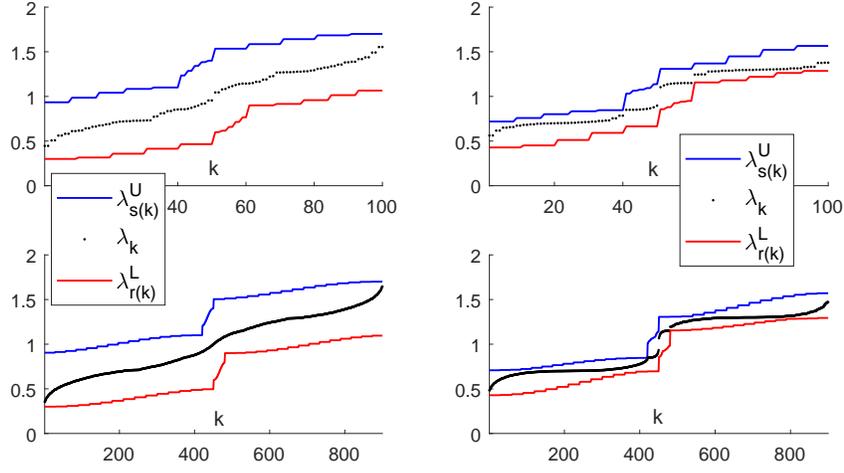}}}
\vspace{0.7in}
\caption{Lower ($\lambda^{\rm L}_{r(k)}$)
	and upper ($\lambda^{\rm U}_{s(k)}$) bounds on eigenvalues 
	$\lambda_k$ of 
	 Example~\ref{ex111}~(a)
	with $N=10^2$ (top graphs) and $N=30^2$ (bottom graphs)
	preconditioned by operators with 
	$\widetilde{\bm{A}}_1$ (left)
	and $\widetilde{\bm{A}}_2$ (right).}
\label{fff1} 
\end{figure}
\begin{figure}[ht]
\vskip2pt plus 2pt \vbox to50pt{\vfil\hbox{\includegraphics{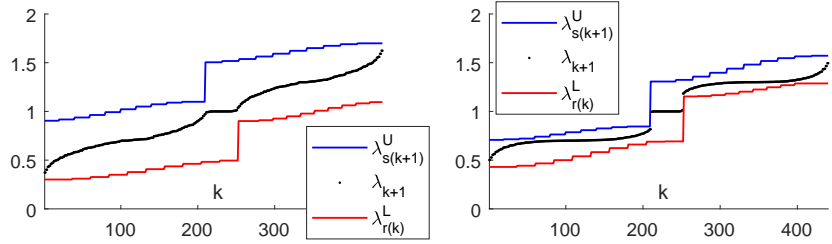}}}
\vspace{0.7in}
\caption{Lower ($\lambda^{\rm L}_{r(k)}$)
	and upper ($\lambda^{\rm U}_{s(k)}$) bounds on eigenvalues 
	$\lambda_{k}$ of 
	 Example~\ref{ex111}~(b)
	with $N=21^2$ preconditioned by operators with 
	$\widetilde{\bm{A}}_1$ (left)
	and $\widetilde{\bm{A}}_2$ (right).}
\label{fff2} 
\end{figure}
\begin{figure}[ht]
\vskip2pt plus 2pt \vbox to50pt{\vfil\hbox{\includegraphics{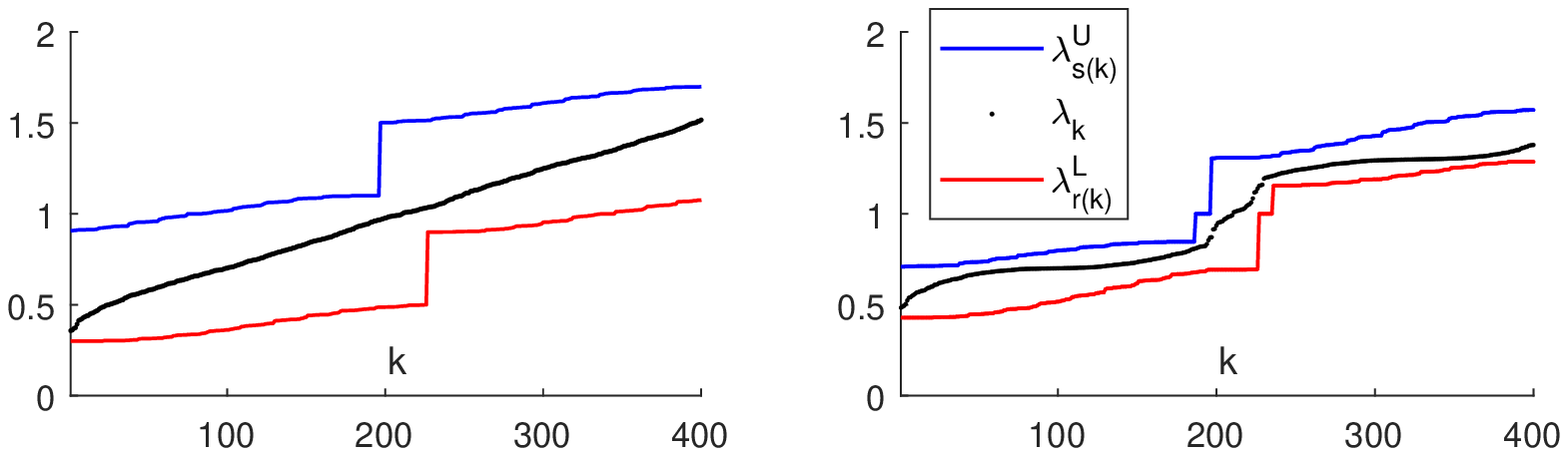}}}
\vspace{0.7in}
\caption{Lower ($\lambda^{\rm L}_{r(k)}$)
	and upper ($\lambda^{\rm U}_{s(k)}$) bounds on eigenvalues 
	$\lambda_{k}$ of 
	Example~\ref{ex111}~(c) with $N=400$ preconditioned by operators with 
	$\widetilde{\bm{A}}_1$ (left)
	and $\widetilde{\bm{A}}_2$ (right) with $g_3=\widetilde{g}_3=1+x_2^2$.}
\label{fff3} 
\end{figure}
\end{ex}

\subsection{Elasticity equation}

In the elasticity problem, or in vector valued problems in general,
the searched function has multiple components, 
$\bm{u}(\bm{x})=(u_1(\bm{x}),\dots,u_d(\bm{x}))^T$, where
individual components are coupled within the equation.
For approximation of the scalar functions $u_j$, $j=1,\dots,d$, we use 
the same sets of the FE basis functions $\varphi_k$, $k=1,\dots,N$, 
supported again inside the patches $\mathcal{P}_k$.
Recall that for the sake of simplicity, we consider 
homogeneous Dirichlet boundary conditions only.

\begin{lem}\label{lem2}
Let $\bm{C}(\bm{x}),\widetilde{\bm{C}}(\bm{x})\in \mathbb{R}^{m\times m}$, where $m=3$ if $d=2$, and $m=6$ if $d=3$.
Let $\bm C$ and $\widetilde{\bm{C}}$ 
be symmetric and positive definite for all $\bm{x}\in \mathcal{D}\subset\Omega$. 
Let there exist constants $0<c_1\le c_2<\infty$ such that 
\begin{equation}\label{C1C}
\sigma(\widetilde{\bm{C}}^{-1}(\bm{x})\bm{C}(\bm{x}))\subset [ c_1,c_2],
\quad \bm{x}\in \mathcal{D}.
\end{equation}
Then for $\bm{u}\in V_0^d$ we get
\begin{equation}\label{lem2a}
c_1\int_\mathcal{D} (\bm{\partial u})^T\cdot\widetilde{\bm{C}}\bm{\partial u} \,{\rm d}\bm{x}\le
\int_\mathcal{D} (\bm{\partial u})^T\cdot{\bm{C\partial u}} \,{\rm d}\bm{x}\le 
c_2
\int_\mathcal{D} (\bm{\partial u})^T\cdot\widetilde{\bm{C}}\bm{\partial u} \,{\rm d}\bm{x}
\end{equation}
\end{lem}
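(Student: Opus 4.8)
The plan is to mimic exactly the proof of Lemma~\ref{lem1}, since the elasticity bilinear form has the same pointwise structure as the diffusion form once the gradient $\nabla u$ is replaced by the strain vector $\bm{\partial u}$. First I would observe that the hypothesis~\eqref{C1C}, namely $\sigma(\widetilde{\bm{C}}^{-1}(\bm{x})\bm{C}(\bm{x}))\subset[c_1,c_2]$ together with symmetry and positive definiteness of both $\bm{C}(\bm{x})$ and $\widetilde{\bm{C}}(\bm{x})$, is equivalent (via the generalized eigenvalue problem, or equivalently by conjugating with $\widetilde{\bm{C}}^{1/2}$) to the pointwise matrix inequality
\begin{equation*}
c_1\,\bm{w}^T\widetilde{\bm{C}}(\bm{x})\bm{w}\le \bm{w}^T\bm{C}(\bm{x})\bm{w}\le c_2\,\bm{w}^T\widetilde{\bm{C}}(\bm{x})\bm{w},\qquad \bm{w}\in\mathbb{R}^m,\ \bm{x}\in\mathcal{D}.
\end{equation*}
This is the vector-valued analogue of the first displayed inequality in the proof of Lemma~\ref{lem1}.

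Next I would specialize $\bm{w}$ to the strain vector $\bm{w}=\bm{\partial u}(\bm{x})\in\mathbb{R}^m$ evaluated at each $\bm{x}\in\mathcal{D}$ — this is legitimate because for $\bm{u}\in V_0^d$ the components $\partial u_k/\partial x_l$ are in $L^2(\Omega)$, so $\bm{\partial u}\in (L^2(\Omega))^m$ and the integrand $(\bm{\partial u})^T\bm{C}\,\bm{\partial u}$ is integrable on $\mathcal{D}$ (using $\bm{C}\in L^\infty$). Integrating the three resulting scalar inequalities over $\mathcal{D}$ preserves them and yields precisely~\eqref{lem2a}. Note there is no boundary term here, unlike Lemma~\ref{lem1}, because the elasticity problem is posed with homogeneous Dirichlet conditions only, so the proof is if anything shorter: no $g_3$/$\widetilde g_3$ comparison and no $\min\{c_1,c_3\}$/$\max\{c_2,c_4\}$ bookkeeping is needed.

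I do not anticipate a genuine obstacle; the only point requiring a word of care is the passage from the spectral containment~\eqref{C1C} to the quadratic-form inequality, since $\widetilde{\bm{C}}^{-1}\bm{C}$ is in general not symmetric. The clean way is: write $\bm{B}(\bm{x})=\widetilde{\bm{C}}^{-1/2}(\bm{x})\bm{C}(\bm{x})\widetilde{\bm{C}}^{-1/2}(\bm{x})$, which is symmetric and similar to $\widetilde{\bm{C}}^{-1}\bm{C}$, hence has spectrum in $[c_1,c_2]$, so $c_1 I\le \bm{B}(\bm{x})\le c_2 I$ in the Loewner order; then substitute $\bm{w}=\widetilde{\bm{C}}^{1/2}\bm{y}$ to recover the displayed inequality for arbitrary $\bm{y}\in\mathbb{R}^m$. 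After that the lemma follows by the substitution $\bm{w}=\bm{\partial u}$ and integration, exactly as in Lemma~\ref{lem1}.
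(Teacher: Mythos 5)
Your proposal is correct and follows essentially the same route as the paper: the paper's proof likewise derives the pointwise quadratic-form inequality $c_1\,\bm{v}^T\widetilde{\bm{C}}\bm{v}\le \bm{v}^T\bm{C}\bm{v}\le c_2\,\bm{v}^T\widetilde{\bm{C}}\bm{v}$ from \eqref{C1C}, sets $\bm{v}=\bm{\partial u}$, and integrates over $\mathcal{D}$. Your extra justification via $\widetilde{\bm{C}}^{-1/2}\bm{C}\widetilde{\bm{C}}^{-1/2}$ only makes explicit a step the paper leaves implicit.
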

\begin{proof}
From~\eqref{C1C} for all $\bm{v}\in \mathbb{R}^d$, $\bm{x}\in \mathcal{D}$, we get
\begin{equation*}
c_1\,\bm{v}^T\widetilde{\bm{C}}(\bm{x})\bm{v}\le \bm{v}^T\bm{C}(\bm{x})
\bm{v}\le c_2\,\bm{v}^T\widetilde{\bm{C}}(\bm{x})\bm{v}.
\end{equation*}
Then by setting $\bm{v}=\bm{\partial}\bm{u}$ and integrating over $\mathcal{D}$,
we obtain~\eqref{lem2a}.
\end{proof}

We now show how to obtain the guaranteed bounds on all individual eigenvalues $0<\lambda_1\le \dots\le \lambda_{dN}$
of the preconditioned elasticity problem 
$\widetilde{\mathsf{C}}^{-1}\mathsf{C}$ for any positive definite 
material data $\bm{C}$ and $\widetilde{\bm{C}}$.
Since $N$ is the number of the FE basis functions defined on $\Omega$
used to approximate each component of $\bm u$, the number of 
unknowns is $dN$.
We now build two sequences $\lambda^{\rm L}_k$ and $\lambda^{\rm U}_k$, $k=1,\dots,dN$, to bound the eigenvalues of $\widetilde{\mathsf{C}}^{-1}\mathsf{C}$.
In contrast to subsection~\ref{sec_diff},
for the sake of brevity, we 
do not define $\alpha^{\rm min}_j$ and $\alpha^{\rm max}_j$,
but we directly set
\begin{eqnarray}
\widehat{\lambda}^{\rm L}_k&=&{\rm ess\, inf}_{\bm{x}\in \mathcal{P}_k}\;\lambda_{\rm min}
\left(\widetilde{\bm{C}}^{-1}(\bm{x}){\bm{C}}(\bm{x})
\right),\nonumber\\
\widehat{\lambda}^{\rm U}_k&=&{\rm ess\, sup}_{\bm{x}\in \mathcal{P}_k}\;
\lambda_{\rm max}
\left(\widetilde{\bm{C}}^{-1}(\bm{x}){\bm{C}}(\bm{x})\right),\nonumber
\end{eqnarray}
$k=1,\dots,N$.
Similarly to the case of the
diffusion equation in section~\ref{sec_diff}, 
we sort these two series non-decreasingly, and thus get bijections 
\begin{equation*}
R,S:\{1,\dots,N\}\to \{1,\dots,N\},
\end{equation*}
such that
\begin{equation*}
\widehat{\lambda}_{R(1)}^{\rm L}\le\dots\le\widehat{\lambda}^{\rm L}_{R(N)},
\quad
\widehat{\lambda}_{S(1)}^{\rm U}
\le\dots\le\widehat{\lambda}^{\rm U}_{S(N)}.\label{LUboundsC2}
\end{equation*}
Moreover, we double (if $d=2$) or triple (if $d=3$) all items in 
the two series of $\widehat{\lambda}^{\rm L}_k$ and $\widehat{\lambda}^{\rm U}_k$ and get two new $d$-times longer series
\begin{eqnarray}
\lambda_{(k-1)d+1}^{\rm L}=\dots=\lambda_{kd}^{\rm L}=\widehat{\lambda}^{\rm L}_k,\quad
\lambda_{(k-1)d+1}^{\rm U}=\dots=\lambda_{kd}^{\rm U}=\widehat{\lambda}^{\rm U}_k,\quad k=1,\dots,N,
\nonumber
\end{eqnarray}
that can be sorted non-decreasingly. Thus we obtain 
two bijections 
\begin{equation*}
r,s:\{1,\dots,dN\}\to \{1,\dots,dN\},
\end{equation*}
such that 
\begin{eqnarray}
\lambda_{r(1)}^{\rm L}=\dots=\lambda_{r(d)}^{\rm L}\le \lambda_{r(d+1)}^{\rm L}=
\dots =
\lambda_{r(2d)}^{\rm L}\le \dots\nonumber\\
\dots\le \lambda_{r(dN-d+1)}^{\rm L}=\dots=
\lambda_{r(dN)}^{\rm L},\label{LUboundsC1}\\
\lambda_{s(1)}^{\rm U}=\dots=\lambda_{s(d)}^{\rm U}\le \lambda_{s(d+1)}^{\rm U}=
\dots =
\lambda_{s(2d)}^{\rm U}\le \dots\nonumber\\
\dots\le \lambda_{s(dN-d+1)}^{\rm U}=\dots=
\lambda_{s(dN)}^{\rm U}.\label{LUboundsC2}
\end{eqnarray}
Note that for $k=1,\dots,N$, 
\begin{equation*}
\widehat{\lambda}^{\rm L}_{R(k)}=\lambda^{\rm L}_{r((k-1)d+1)}=
\dots =\lambda^{\rm L}_{r(kd)},\quad
\widehat{\lambda}^{\rm U}_{S(k)}=\lambda^{\rm U}_{s((k-1)d+1)}=
\dots =\lambda^{\rm U}_{s(kd)}.
\end{equation*}

Now we can introduce the second part of the main results of this paper.
\begin{The}\label{thm36}
The lower and upper bounds on all 
eigenvalues $0<\lambda_1\le \lambda_2\le \dots\le \lambda_{dN}$
of $\widetilde{\mathsf{C}}^{-1}\mathsf{C}$ 
can be obtained from~\eqref{LUboundsC1} and~\eqref{LUboundsC2}, namely
\begin{equation}\label{thm2a}
\lambda^{\rm L}_{r(k)}\le \lambda_k\le \lambda^{\rm U}_{s(k)},\qquad
k=1,\dots,dN.
\end{equation}
\end{The}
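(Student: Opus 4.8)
The plan is to mimic the proof of Theorem~\ref{thm1} almost verbatim, replacing the scalar gradient form by the strain form $(\bm{\partial u})^T\bm{C}\bm{\partial u}$ and using Lemma~\ref{lem2} in place of Lemma~\ref{lem1}. First I would note that, since $\partial\Omega_1=\partial\Omega$ in the elasticity setting and $\bm{C},\widetilde{\bm{C}}$ are uniformly positive definite, Korn's inequality guarantees that the stiffness matrices $\mathsf{C}$ and $\widetilde{\mathsf{C}}$ are symmetric positive definite, so the eigenvalues $0<\lambda_1\le\dots\le\lambda_{dN}$ of $\widetilde{\mathsf{C}}^{-1}\mathsf{C}$ are well defined and positive, and the Courant--Fischer min-max characterization
\begin{equation*}
\lambda_k=\max_{S,\ \dim S=dN-k+1}\ \min_{\mathsf{v}\in S,\ \mathsf{v}\ne\mathsf{0}}
\frac{\mathsf{v}^T\mathsf{C}\mathsf{v}}{\mathsf{v}^T\widetilde{\mathsf{C}}\mathsf{v}}
\end{equation*}
applies.

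Next I would prove only the lower bounds $\lambda^{\rm L}_{r(k)}\le\lambda_k$; the upper bounds are symmetric. The key observation is the bookkeeping identification already recorded before the theorem: sorting the $d$-fold repeated list gives $\widehat{\lambda}^{\rm L}_{R(k)}=\lambda^{\rm L}_{r((k-1)d+1)}=\dots=\lambda^{\rm L}_{r(kd)}$, so it suffices to show, for every $m\in\{1,\dots,N\}$ and every $p\in\{0,\dots,d-1\}$, that $\lambda_{(m-1)d+p+1}\ge\widehat{\lambda}^{\rm L}_{R(m)}$. To do this I choose in the min-max formula the test subspace $S$ consisting of all $\mathsf{v}\in\mathbb{R}^{dN}$ whose components vanish on the index set that corresponds, component-by-component of $\bm u$, to the patches $\mathcal{P}_{R(1)},\dots,\mathcal{P}_{R(m-1)}$ that carry the smallest $m-1$ values of $\widehat{\lambda}^{\rm L}$, together with $p$ additional scalar constraints removing $p$ more degrees of freedom associated to $\mathcal{P}_{R(m)}$; this subspace has dimension $dN-(m-1)d-p=dN-((m-1)d+p+1)+1$, which is exactly what the Courant--Fischer formula requires for $\lambda_{(m-1)d+p+1}$.

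Then, for $\mathsf{v}\in S$, writing the associated displacement $\bm u=(u_1,\dots,u_d)^T$ with $u_i=\sum_j \mathsf{v}_{(i-1)N+j}\varphi_j$, the imposed zero components mean that $\bm u$ is supported inside $\mathcal{D}=\bigcup_{l\ge m}\mathcal{P}_{R(l)}$ (up to the $p$ extra constraints, which only shrink the support further), and on this $\mathcal{D}$ we have by construction $\sigma(\widetilde{\bm{C}}^{-1}(\bm{x})\bm{C}(\bm{x}))\subset[\widehat{\lambda}^{\rm L}_{R(m)},\infty)$ for a.e.\ $\bm{x}$. Applying Lemma~\ref{lem2} with $c_1=\widehat{\lambda}^{\rm L}_{R(m)}$ on this $\mathcal{D}$ yields
\begin{equation*}
\frac{\mathsf{v}^T\mathsf{C}\mathsf{v}}{\mathsf{v}^T\widetilde{\mathsf{C}}\mathsf{v}}
=\frac{\int_{\mathcal{D}}(\bm{\partial u})^T\bm{C}\bm{\partial u}\,{\rm d}\bm{x}}
{\int_{\mathcal{D}}(\bm{\partial u})^T\widetilde{\bm{C}}\bm{\partial u}\,{\rm d}\bm{x}}
\ge\widehat{\lambda}^{\rm L}_{R(m)},
\end{equation*}
so the inner minimum over $S$ is bounded below by $\widehat{\lambda}^{\rm L}_{R(m)}$, hence so is the outer maximum, i.e.\ $\lambda_{(m-1)d+p+1}\ge\widehat{\lambda}^{\rm L}_{R(m)}=\lambda^{\rm L}_{r((m-1)d+p+1)}$. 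Ranging over all $m$ and $p$ gives all the lower bounds in~\eqref{thm2a}; repeating with $\mathop{\rm ess\,sup}$, the reverse inequality in Lemma~\ref{lem2}, the dimension-$k$ subspaces, and the sorted upper list gives the upper bounds.

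I expect the main obstacle to be the index bookkeeping rather than any analytic difficulty: one must argue carefully that zeroing out the $dN$-vector entries tied to the $m-1$ patches with smallest $\widehat{\lambda}^{\rm L}$ genuinely forces $\bm u\equiv 0$ outside $\mathcal{D}=\bigcup_{l\ge m}\mathcal{P}_{R(l)}$ (this uses that the $\varphi_j$ are supported in $\mathcal{P}_j$ and that the same basis is used for every component $u_i$), and that the extra $p$ scalar constraints can always be chosen so that the resulting subspace still has the exact dimension demanded by Courant--Fischer while keeping the relevant support inside a region where the spectral bound $\widehat{\lambda}^{\rm L}_{R(m)}$ holds. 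Once this combinatorial step is in place, the estimate is an immediate application of Lemma~\ref{lem2}, exactly parallel to Theorem~\ref{thm1}, with the only genuinely new ingredient being Korn's inequality for the positive definiteness of $\mathsf{C}$ and $\widetilde{\mathsf{C}}$.
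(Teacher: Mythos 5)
Your proposal is correct and follows essentially the same route as the paper's proof: the Courant--Fischer min-max characterization combined with Lemma~\ref{lem2} applied on the union $\mathcal{D}$ of the remaining patches, after deleting $\varphi_{R(l)}$ from all $d$ component bases for the patches carrying the smallest values of $\widehat{\lambda}^{\rm L}$. The only cosmetic differences are that the paper avoids your $p$ extra constraints by bounding the whole block $\lambda_{(m-1)d+1}\le\dots\le\lambda_{md}$ at once via monotonicity of the ordered eigenvalues, and that it leaves the positive definiteness of $\mathsf{C}$ and $\widetilde{\mathsf{C}}$ (your Korn-inequality remark) implicit.
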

\begin{proof}
The proof is similar to the proof of Theorem~\ref{thm1}.
By the Courant--Fischer min-max theorem, 
\begin{equation*}
\lambda_k=\max_{S,\, {\rm dim}S=dN-k+1}\, \min_{\mathsf{v}\in S,\, \mathsf{v}
\ne \mathsf{0} }
\frac{\mathsf{v}^T\mathsf{Cv}}{\mathsf{v}^T\widetilde{\mathsf{C}}\mathsf{v}}.
\end{equation*}
Then
\begin{equation*}
\lambda_d\ge\dots\ge\lambda_1= \min_{\mathsf{v}\in \mathbb{R}^{dN},\, \mathsf{v}
\ne \mathsf{0} }
\frac{\mathsf{v}^T\mathsf{Cv}}{\mathsf{v}^T\widetilde{\mathsf{C}}\mathsf{v}}\ge \lambda^{\rm L}_{r(1)}=\dots=\lambda^{\rm L}_{r(d)},
\end{equation*}
where the last inequality follows from Lemma~\ref{lem2}. 
Indeed, representing the coefficients of the components of
$\bm{u}=(u_1,\dots,u_d)$ 
with respect to the FE basis functions in a single vector 
$\mathsf v=(\mathsf{v}_{(1)}^T, \dots,\mathsf{v}_{(d)}^T)^T$,
$\mathsf{v}_{(j)}\in\mathbb{R}^N$, $j=1,\dots,d$, we get
\begin{equation*}
\frac{\mathsf{v}^T\mathsf{Cv}}{\mathsf{v}^T\widetilde{\mathsf{C}}\mathsf{v}}= 
\frac{\int_\Omega (\bm{\partial u})^T\cdot{\bm{C}}\bm{\partial u} \,{\rm d}\bm{x}}
{\int_\Omega (\bm{\partial u})^T\cdot\widetilde{\bm{C}}\bm{\partial u} \,{\rm d}\bm{x}}
\ge \min_{\mathcal{P}_k\subset\Omega} \widehat{\lambda}^{\rm L}_k=
\widehat{\lambda}^{\rm L}_{R(1)}=\lambda^{\rm L}_{r(1)}=\dots=
\lambda^{\rm L}_{r(d)}.
\end{equation*}
Next, we remove $\varphi_{R(1)}$ from all $d$ bases approximating
the components of $\bm{u}=(u_1,\dots,u_d)$.
Then 
\begin{equation*}
\lambda_{2d}\ge\dots\lambda_{d+1}\ge 
\min_{\mathsf{v}\in \mathbb{R}^N,\, \mathsf{v}
\ne \mathsf{0},\, \mathsf{v}_{R(1)}=0, 
\dots,\mathsf{v}_{(d-1)N+R(1)}=0,   }
\frac{\mathsf{v}^T\mathsf{Cv}}{\mathsf{v}^T\widetilde{\mathsf{C}}\mathsf{v}}\ge \lambda^{\rm L}_{r(d+1)}=\dots=\lambda^{\rm L}_{r(2d)},
\end{equation*}
where the last inequality follows from
\begin{equation*}
\frac{\mathsf{v}^T\mathsf{Cv}}{\mathsf{v}^T\widetilde{\mathsf{C}}\mathsf{v}}
=\frac{\int_\mathcal{D} (\bm{\partial u})^T\cdot{\bm{C}}\bm{\partial u} \,{\rm d}\bm{x}}
{\int_\mathcal{D} (\bm{\partial u})^T\cdot\widetilde{\bm{C}}\bm{\partial u} \,{\rm d}\bm{x}}
\ge  \min_{\mathcal{P}_k\subset\mathcal{D}}
\widehat{\lambda}^{\rm L}_k=
\widehat{\lambda}^{\rm L}_{R(2)}= \lambda^{\rm L}_{r(d+1)}=\dots=
\lambda^{\rm L}_{r(2d)},
\end{equation*}
where $\mathsf{v}_{R(1)}=0,\dots,\mathsf{v}_{(d-1)N+R(1)}=0$,
and correspondingly,
\begin{equation*}
\mathcal{D}=\cup_{j\in\{1,\dots,N\}\setminus \{R(1)\}} \mathcal{P}_j.
\end{equation*}
Continuing further in this way, we can prove the lower bounds in~\eqref{thm1a}.
Analogously, we can get the upper bounds.
\end{proof}

\begin{ex}\label{ex_ela1}
Assume the elasticity equation with homogeneous Dirichlet 
boundary conditions,
$d=2$, $\Omega=(-\pi,\pi)^2$, $N=21^2$, and the data
\begin{equation}\label{Cex}
\bm{C}(\bm{x})=\frac{E(\bm{x})}{(1+\nu)(1-2\nu)}
\left(\begin{array}{ccc}
1-\nu   &  \nu & 0\\
\nu  &     1-\nu & 0\\
0 & 0  & 0.5\,(1-2\nu)
\end{array}\right),
\end{equation}
where 
\begin{equation*}
E(\bm{x})=1+0.3\,{\rm sign}\,(x_1x_2),\quad \nu=0.2.
\end{equation*}
Preconditioning is performed with the constant (homogeneous) 
data of the type~\eqref{Cex}
with $E=1$ and either $\nu=0$ or $\nu=0.2$, denoted by 
$\widetilde{\bm{C}}_1$ and $\widetilde{\bm{C}}_2$, respectively.
A~uniform grid with piece-wise bilinear FE functions is employed.
We can see in Figure~\ref{fff4} that the preconditioning matrix 
using the data $\widetilde{\bm{C}}_2$, which are closer to 
${\bm{C}}$, yields the spectrum of the preconditioned 
matrix closer to unity. 
Moreover, we can notice two clusters of eigenvalues approximately 
equal to 0.7 and 1.3, 
respectively.
The numbers of the CG steps to reduce the energy norms of the errors
by the factor of $10^{-9}$ are 14 and 11 for  
$\widetilde{\bm{C}}_1$ and $\widetilde{\bm{C}}_2$, respectively,
when we consider $\bm{F}=(1,0)^T$.
In this example, $\widetilde{\bm{C}}_1$
is diagonal, while $\widetilde{\bm{C}}_2$ is more filled in. Therefore,
the overall efficiency strongly depends on implementation
of the preconditioner.
These considerations are, however, behind the scope of this paper.
\begin{figure}[ht]
\vskip2pt plus 2pt \vbox to50pt{\vfil\hbox{\includegraphics{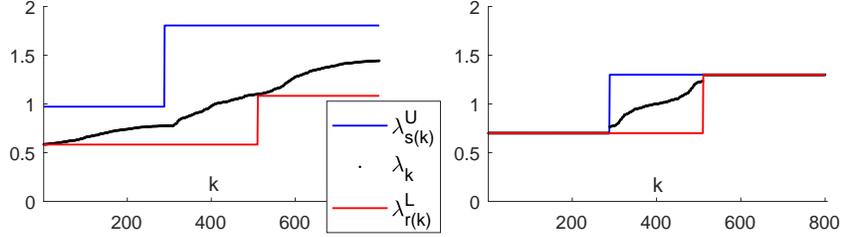}}}
\vspace{0.7in}
\caption{Lower ($\lambda^{\rm L}_{r(k)}$)
	and upper ($\lambda^{\rm U}_{s(k)}$) bounds on eigenvalues 
	$\lambda_{k}$ of
	the elasticity problem of Example~\ref{ex_ela1} with $N=21^2$ preconditioned by operators with 
	$\widetilde{\bm{C}}_1$ (left)
	and $\widetilde{\bm{C}}_2$ (right).}
\label{fff4} 
\end{figure}
\end{ex}

\begin{remark}
The bilinear form $(\bm{u},\bm{v})_C$ associated with the 
linear elasticity operator is equivalent with the following
bilinear forms defined in $V_0^d$,
see~\cite{Blaheta1994},
\begin{eqnarray}
(\bm{u},\bm{v})_{C,\triangle}&=&\int_{\Omega}\sum_{i,j=1}^d
\frac{\partial v_i}{\partial x_j}\frac{\partial u_i}{\partial x_j}
\,{\rm d}\bm{x}\nonumber\\
(\bm{u},\bm{v})_{C,\varepsilon}&=&\int_{\Omega}
(\bm{\partial v})^T\bm{\partial u}\,{\rm d}\bm{x}
\nonumber\\
(\bm{u},\bm{v})_{C,d}&=&\int_{\Omega}
(\bm{\partial}(v_1,0,\dots)^T)^T\bm{C\partial}(u_1,0,\dots)^T+\dots
\nonumber\\
&&\dots+(\bm{\partial}(\dots,0,v_d)^T)^T\bm{C\partial}(\dots,0,u_d)^T
\,{\rm d}\bm{x}.\nonumber
\end{eqnarray}
The equivalence constants and the proofs can be found in~\cite{Blaheta1994}
and in the references therein. 
We may notice that our preconditioning matrix
$\widetilde{\mathsf{C}}$ with the data in the form
$\widetilde{\bm{C}}(\bm{x})=\bm{I}$ is the same as 
the matrix of the discretized form $(\bm{u},\bm{v})_{C,\varepsilon}$.
Therefore, using our method for obtaining the bounds on the eigenvalues of preconditioned problems can be used to estimate
the equivalence constants of the above forms defined in finite-dimensional subspaces of $V_0^d$ spanned by the FE basis functions;
for example, we can immediately get
\begin{equation*}
\lambda^{\rm L}_{r(1)}(\bm{u},\bm{u})_{C,\varepsilon}\le 
(\bm{u},\bm{u})_{C}\le \lambda^{\rm U}_{s(dN)}(\bm{u},\bm{u})_{C,\varepsilon}.\end{equation*}
\end{remark}

\subsection{General remarks}\label{Sec33}

Let us now compare our results obtained for the diffusion equation
with the recent results from~\cite{G}. Analogies for the elasticity
equation can be considered straightforwardly.
In~\cite{G}, the existence of a pairing between 
the eigenvalues of the preconditioned matrix 
and the intervals obtained from the scalar data 
defined on the patches is proved. Especially, in any of the intervals,
some eigenvalue must be found.
This allows us to estimate the accuracy 
of the bounds provided that the scalar data 
are continuous or mildly changing in $\Omega$.
In our paper, instead, we get that
$\lambda_k\in [\lambda^{\rm L}_{r(k)},\lambda^{\rm U}_{s(k)}]$,
or $\lambda_{k}\in [\lambda^{\rm L}_{r(k-1)},\lambda^{\rm U}_{s(k)}]$
if the operator is semi-definite with the null space of the
dimension $1$. Let us note that 
\begin{equation*}
\lambda^{\rm L}_k\le \lambda^{\rm U}_k,\quad 
\lambda^{\rm L}_{r(k)}\le \lambda^{\rm U}_{s(k)},\quad
r(k)\le s(k),\quad k=1,\dots,N,
\end{equation*}
but $r(k)\ne s(k)$
in general, thus the intervals containing the 
individual eigenvalues are different than the intervals
obtained in~\cite{G}.
Sometimes, however, the intervals obtained by our method
and by the method of~\cite{G} (ordered appropriately) 
coincide; see the following example.
\begin{ex}\label{ex_G}
Let us consider the test problem from~\cite[Section~4]{G}:
the diffusion equation, $\Omega=(0,1)^2$, 
$\bm{A}(\bm{x})=\sin(x_1+x_2)\bm{I}$, and
homogeneous Dirichlet boundary conditions on $\partial\Omega$.
Let us use a uniform grid with piece-wise bilinear FE functions, $N=9^2$ or $N=19^2$. For preconditioning we use 
$\widetilde{\bm{A}}(\bm{x})=\bm{I}$.
The appropriatelly ordered bounds provided by~\cite{G} 
and the bounds obtained by our method coincide; they are 
displayed on Figure~\ref{fig_G2}.
\begin{figure}[ht]
\vskip2pt plus 2pt \vbox to50pt{\vfil\hbox{\includegraphics{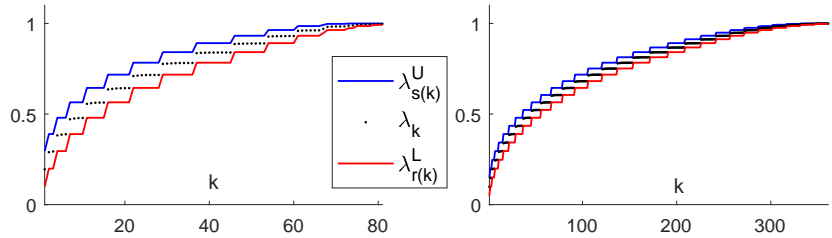}}}
\vspace{0.7in}
\caption{Lower ($\lambda^{\rm L}_{r(k)}$) and upper
	($\lambda^{\rm U}_{s(k)}$) bounds on eigenvalues $\lambda_{k}$ of 
	Example~\ref{ex_G} with $N=9^2$
	(left) and $N=19^2$ (right).}
\label{fig_G2} 
\end{figure}
\end{ex}

Modifying the approach developed in~\cite{G} to our setting
with the tensor data, we can also prove
that there exists a permutation $p:\{1,\dots,N\}\to \{1,\dots,N\}$
such that
\begin{equation}\label{jakoG}
\lambda_k\in [\lambda^{\rm L}_{p(k)},\lambda^{\rm U}_{p(k)}],\quad 
k=1,\dots,N.
\end{equation}
The only change in the proof consists of substituting the
extremes of the scalar material data on every patch $\mathcal{P}_j$ 
by the extremes of the eigenvalues of 
$\widetilde{\bm{A}}^{-1}(\bm{x})\bm{A}(\bm{x})$ on $\mathcal{P}_j$.
Therefore, we do not provide the proof here.

Using~\eqref{jakoG}, under some special conditions, 
analogously to the results of~\cite{G},
some eigenvalues can be identified exactly
including their multiplicity. Since we do not present the proof
of~\eqref{jakoG}, let us formulate and prove this statement
separately. For the sake of brevity, we
formulate it for the case of the nonsingular 
diffusion equation with the 
tensor data only.
Generalization to problems with vector valued unknowns is straightforward;
see also Example~\ref{ex_ela1}. 
\begin{lem}
Let there exist $c>0$ such that $\widetilde{\bm{A}}^{-1}(\bm{x})\bm{A}(\bm{x})=c\bm{I}$ 
on a union of $m$ patches $\mathcal{D}=\cup_{k=1}^m
\mathcal{P}_{j_k}$. Let none of the patches $\mathcal{P}_{j_k}$,
$k=1,\dots,m$, attaches to $\partial\Omega_2$
where $g_3(\bm{x})\ne 0$ or $\widetilde{g}_3\ne 0$,
and let the patches be
associated with $m$ linearly independent FE functions $\varphi_{j_1},\dots,\varphi_{j_m}$.
Let $\mathsf{A}$ be nonsingular.
Then $c$ is an eigenvalue of $\widetilde{\mathsf{A}}^{-1}\mathsf{A}$
of multiplicity at least $m$.
\end{lem}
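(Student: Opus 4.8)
The plan is to exhibit $m$ explicit eigenvectors of $\widetilde{\mathsf{A}}^{-1}\mathsf{A}$ with eigenvalue $c$, namely the coefficient vectors $\mathsf{e}_{j_1},\dots,\mathsf{e}_{j_m}$ associated with the FE functions $\varphi_{j_1},\dots,\varphi_{j_m}$ whose patches lie in $\mathcal{D}$. First I would observe that $\widetilde{\mathsf{A}}^{-1}\mathsf{A}\,\mathsf{e}_{j_k}=c\,\mathsf{e}_{j_k}$ is equivalent to $\mathsf{A}\,\mathsf{e}_{j_k}=c\,\widetilde{\mathsf{A}}\,\mathsf{e}_{j_k}$, i.e.\ to $\mathsf{A}_{l j_k}=c\,\widetilde{\mathsf{A}}_{l j_k}$ for all $l=1,\dots,N$. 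Writing out these entries using the definitions of $\mathsf{A}_{lj_k}$ and $\widetilde{\mathsf{A}}_{lj_k}$ in Section~\ref{Sec_Discret}, the difference $\mathsf{A}_{lj_k}-c\,\widetilde{\mathsf{A}}_{lj_k}$ equals
\begin{equation*}
\int_{\Omega}\nabla\varphi_l\cdot\bigl(\bm{A}-c\widetilde{\bm{A}}\bigr)\nabla\varphi_{j_k}\,{\rm d}\bm{x}
+\int_{\partial\Omega_2}\bigl(g_3-c\widetilde{g}_3\bigr)\varphi_l\varphi_{j_k}\,{\rm d}S .
\end{equation*}
The key point is that the integrands are supported on $\mathcal{P}_{j_k}\subset\mathcal{D}$ (since $\varphi_{j_k}$ and its gradient vanish outside $\mathcal{P}_{j_k}$), and on $\mathcal{D}$ the hypothesis gives $\bm{A}(\bm{x})=c\widetilde{\bm{A}}(\bm{x})$, hence $\bm{A}-c\widetilde{\bm{A}}=\bm{0}$ a.e.\ there; moreover, since no $\mathcal{P}_{j_k}$ attaches to $\partial\Omega_2$ where $g_3\ne 0$ or $\widetilde{g}_3\ne 0$, the boundary integral also vanishes. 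Therefore both integrals are zero for every $l$, which shows $\mathsf{A}\,\mathsf{e}_{j_k}=c\,\widetilde{\mathsf{A}}\,\mathsf{e}_{j_k}$.

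Next I would note that $c>0$ is indeed an eigenvalue: since $\widetilde{\mathsf{A}}$ is symmetric positive definite (because $\mathsf{A}$ is assumed nonsingular and, under the standing assumptions of the section, this forces positive definiteness of both matrices, or equivalently because $c\widetilde{\mathsf{A}}\,\mathsf{e}_{j_k}=\mathsf{A}\,\mathsf{e}_{j_k}\ne\mathsf{0}$ as $\mathsf{A}$ is nonsingular and $\mathsf{e}_{j_k}\ne\mathsf{0}$), the vectors $\widetilde{\mathsf{A}}^{-1}\mathsf{A}\,\mathsf{e}_{j_k}=c\,\mathsf{e}_{j_k}$ are well-defined and nonzero. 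Finally, to get multiplicity at least $m$, I would argue that the eigenvectors $\mathsf{e}_{j_1},\dots,\mathsf{e}_{j_m}$ are linearly independent: they are distinct standard coordinate vectors in $\mathbb{R}^N$ (the FE functions $\varphi_{j_1},\dots,\varphi_{j_m}$ being linearly independent members of a basis, their index set consists of $m$ distinct indices), so they span an $m$-dimensional subspace contained in the eigenspace of $\widetilde{\mathsf{A}}^{-1}\mathsf{A}$ for the eigenvalue $c$. Hence the algebraic (indeed geometric) multiplicity is at least $m$.

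I do not anticipate a serious obstacle here; the statement is essentially a direct computation once one sees that $\mathsf{e}_{j_k}$ is the natural candidate eigenvector and that the support of $\varphi_{j_k}$ confines all relevant integrals to $\mathcal{D}$. The one point deserving a line of care is the boundary term: one must invoke the hypothesis that the patches $\mathcal{P}_{j_k}$ do not meet the part of $\partial\Omega_2$ where $g_3$ or $\widetilde{g}_3$ is nonzero, so that $\int_{\partial\Omega_2}(g_3-c\widetilde{g}_3)\varphi_l\varphi_{j_k}\,{\rm d}S=0$ even though $g_3-c\widetilde{g}_3$ need not vanish identically on $\partial\Omega_2$. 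The linear-independence step is immediate since we are literally dealing with coordinate vectors, so no genuine linear-algebra work is needed beyond that remark.
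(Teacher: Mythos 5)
Your proposal is correct and follows essentially the same route as the paper: both exhibit the coordinate vectors $\mathsf{e}^{(j_k)}$ as eigenvectors by showing $\mathsf{A}\mathsf{e}^{(j_k)}=c\,\widetilde{\mathsf{A}}\mathsf{e}^{(j_k)}$ via the integral representation of the matrix entries, using that $\bm{A}=c\widetilde{\bm{A}}$ on $\mathcal{P}_{j_k}$ and that the boundary terms vanish, and then conclude multiplicity $\ge m$ from the linear independence of these vectors. The only cosmetic difference is that the paper phrases the identity as a Rayleigh-type quotient against an arbitrary test vector $\mathsf{v}$, while you verify it entry by entry.
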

\begin{proof}
Let $\mathsf{e}^{(j)}\in\mathbb{R}^N$ denote a zero vector with 
the $j$th component equal to unity.
Then for every $j=j_1,\dots,j_m$, 
\begin{equation*}
\frac{\mathsf{v}^T\mathsf{Ae}^{(j)}}
{\mathsf{v}^T\widetilde{\mathsf{A}}\mathsf{e}^{(j)}}=
\frac{\int_\Omega \nabla v\cdot\bm{A}\nabla\varphi_j
\,{\rm d}\bm{x}+\int_{\partial\Omega_2}g_3\varphi_j v\, {\rm d}S
}{\int_\Omega \nabla v\cdot\widetilde{\bm{A}}\nabla\varphi_j
\,{\rm d}\bm{x}+\int_{\partial\Omega_2}\widetilde{g}_3\varphi_j v\, {\rm d}S}=
\frac{c\int_\Omega \nabla v\cdot\widetilde{\bm{A}}\nabla\varphi_j
\,{\rm d}\bm{x}
}{\int_\Omega \nabla v\cdot\widetilde{\bm{A}}\nabla\varphi_j
\,{\rm d}\bm{x}}=c
\end{equation*}
for all $\mathsf{v}\in \mathbb{R}^N$, $\mathsf{v}\ne \mathsf{0}$. This means that $c$ is an eigenvalue of 
$\widetilde{\mathsf{A}}^{-1}{\mathsf{A}}$ associated with the eigenvectors $\mathsf{e}^{(j)}$, $j=j_1,\dots,j_m$.
Since the eigenvectors are linearly independent, the multiplicity of $c$ is at least $m$.
\end{proof}

Let us finally focus on limitations of our theory.
We could see that in some examples 
the bounds did not get closer to the real eigenvalues when
the mesh-size decreases. 
As a representative 2D example we can take
the diffusion equation with the constant data, say, 
$\bm{A}={\rm diag}\,(2,1)$
preconditioned by the Laplacian, i.e.~$\widetilde{\bm{A}}=
{\rm diag}\,(1,1)$. While the constant lower and upper 
bounds are obtained 
\begin{equation*}
\lambda^{\rm L}_k=1,\quad \lambda^{\rm U}_k=2,\quad k=1,\dots,N,
\end{equation*}
the true eigenvalues of $\widetilde{\mathsf{A}}^{-1}\mathsf{A}$ 
are distributed 
between these two bounds almost achieving both extremes 1 and 2.
We could conclude that if the data are of the tensor type 
and if the preconditioner is poor, i.e.~$\widetilde{\bm{A}}^{-1}
(\bm{x})\bm{A}(\bm{x})$ is not close enough to a multiple of the 
identity $\bm{I}$ in $\Omega$, the bounds 
$\lambda^{\rm L}_{r(k)}$ and $\lambda^{\rm U}_{s(k)}$
may not say much about the 
true eigenvalues; the types of the FE basis functions and  
of the mesh
influence the distribution of the true eigenvalues as well.

\section{Conclusion}

Up to our knowledge, \cite{G} is the first paper on estimating all
eigenvalues of a preconditioned discretized diffusion operator.
Motivated by~\cite{G}, we further contribute to
this theory by generalizing some of these results to 
vector valued equations with 
tensor data and with more general boundary
conditions preconditioned by arbitrary operators of the same type. Moreover, we provide guaranteed bounds (defined by~\eqref{LUbounds}
and by~\eqref{LUboundsC1}--\eqref{LUboundsC2}
for scalar and vector problems, respectively) to every particular 
eigenvalue. 
Analogously to~\cite{G},
the bounds are easily accessible and obtained solely from 
the data defined on supports of the FE basis functions.
If the data are element-wise constant,
only $O(N)$ arithmetic operations 
and sorting of two series of $N$ numbers must be performed.
The Courant--Fisher min-max theorem is used in
our approach. 
Although we applied our method to only two types of elliptic differential equations, we are convinced that the same approach can be 
used in a wide variety of elliptic problems.

{\small
}

{\small
{\em Martin Ladeck\' y}, Czech Technical University in Prague, Prague, Czech Republic, e-mail: \texttt{martin.ladecky@\allowbreak fsv.cvut.cz}.
{\em Ivana Pultarov\' a}, Czech Technical University in Prague, Prague, Czech Republic, e-mail: \texttt{ivana.pultarova@\allowbreak fsv.cvut.cz}.
{\em Jan Zeman}, Czech Technical University in Prague, Prague, Czech Republic, e-mail: \texttt{jan.zeman@\allowbreak fsv.cvut.cz}.
}

\end{document}